\newtheorem*{rep@theorem}{\rep@title}
\newcommand{\newreptheorem}[2]{
\newenvironment{rep#1}[1]{
 \def\rep@title{#2 \ref{##1}}%
 \begin{rep@theorem}}%
 {\end{rep@theorem}}}
\newcommand{\figwidth}{2in}
\newtheorem{thm}{Theorem}[section]
\newtheorem{lem}[thm]{Lemma}
\newtheorem{prop}[thm]{Proposition}
\newtheorem{definition}{Definition}[section]
\newcommand{\R}{\mathbb{R}}
\newcommand{\kO}{\mathcal{O}}
\newcommand{\Q}{\mathbb{Q}}
\newcommand{\uhr}{\upharpoonright}
\newcommand{\tP}{\tilde{P}}
\newcommand{\Nat}{\mathbb{N}}
\newcommand{\hband}{\hbox{ and }}
\begin{document}

\title{A lightface analysis of the differentiability rank}
\author{Linda Brown Westrick}
\date{June 29, 2013}
\email{westrick@math.berkeley.edu}

\begin{abstract}
  We examine the computable part of the differentiability hierarchy
  defined by Kechris and Woodin.  In that hierarchy, the rank of a
  differentiable function is an ordinal less than $\omega_1$ which
  measures how complex it is to verify differentiability for that
  function.  We show that for each recursive ordinal $\alpha>0$, the
  set of Turing indices of $C[0,1]$ functions that are differentiable
  with rank at most $\alpha$ is $\Pi_{2\alpha+1}$-complete. This
  result is expressed in the notation of Ash and Knight.
\end{abstract}

\maketitle

\section{Introduction}

The set of differentiable $C[0,1]$ functions is not Borel, but it can be represented hierarchically as an increasing union of Borel sets.  Three hierarchies for the differentiable functions have been proposed (see the work of Ki \cite{Ki97} for a summary).\nocite{ramsamujh}  Each hierarchy is defined using an \emph{ordinal rank}, a mapping from differentiable functions to countable ordinals, whose range is unbounded below $\omega_1$.  Here we focus on Kechris and Woodin's differentiability rank \cite{kw}, denoted $|\cdot |_{KW}$.  It decomposes the set $\mathcal{D}$ of differentiable $C[0,1]$ functions as $$\mathcal{D} = \bigcup_{\alpha<\omega_1} \{f : |f|_{KW} < \alpha\}$$ where each constituent of the union is Borel.  

Our contribution is a finer-grained, recursion-theoretic analysis of this hierarchy.  The lightface situation mirrors the boldface situation in many ways.  We begin with the observation (a corollary of the work in \cite{kw}) that the set $D$ of integer codes for computable differentiable $C[0,1]$ functions is a $\Pi_1^1$-complete set, and it decomposes as $$D = \bigcup_{\alpha<\omega_1^{CK}} \{c : c \text{ codes } f \text{ with } |f|_{KW}< \alpha\}$$ where each constituent of the union is hyperarithmetic.  Our results pinpoint the exact location of each constituent set in the transfinitely-extended Levy hierarchy.

\begin{repthm}{maintheorem}
For each nonzero $\alpha<\omega_1^{CK}$, the set $$\{c : c \text{ codes } f \text{ with } |f|_{KW}< \alpha+1\}$$ is $\Pi_{2\alpha+1}$-complete.
\end{repthm}
Here and throughout we use the notational convention of Ash and Knight \cite{ashknight} for a $\Sigma_\alpha$ set, discussed in Section \ref{sec2.2}.  We also analyze the limit case:

\begin{repthm}{limittheorem}
For each limit $\lambda <\omega_1^{CK}$, the set \mbox{$\{c : c \text{ codes } f \text{ with } |f|_{KW}< \lambda\}$} is $\Sigma_{\lambda}$-complete.
\end{repthm}

The study of differentiation through the lens of computable analysis
has typically involved restricting attention to the continuously
differentiable functions.  The definition of a computable function proposed by
Grzegorczyk and Lacombe, and further developed
by Pour-El and Richards and others (see \cite{grz}, \cite{lac}, \cite{pour-el}),
 has no notion of computability
for a discontinuous function.  Therefore, restricting differentiation
 to the continuously differentiable functions is a strategy for making
 questions such as ``Is differentiation computable?'' meaningful.
  The fact that $f\mapsto f'$ is not
computable was first demonstrated by Myhill \cite{Myhill}, who
constructed a computable function whose continuous derivative is not
computable.  

At the other end of the spectrum, computable functions that are not
everywhere differentiable have been studied.  Brattka, Miller and
 Nies (to appear) have used randomness notions to characterize the
 points at which all computable almost everywhere differentiable
 functions must be differentiable.
However, as far as the author is aware, the
everywhere differentiable functions with discontinuous derivatives have not yet
been studied in the setting of computable analysis.

Previously, Cenzer and Remmel \cite{cenzer} showed that $\{e : f_e
\text{ is continuously differentiable}\}$ is $\Pi^0_3$-complete, which
is the same as the $\alpha=1$ case of our Theorem \ref{maintheorem}.
They also showed that $\{e: f_e \text{ is continuously differentiable
and computable}\}$ is $\Sigma^0_3$-complete.  Again, only continuously
differentiable functions were considered.  By contrast, our aim is to
provide a clearer picture of the structure of the unrestricted set of
everywhere differentiable functions.

In Section \ref{preliminaries} we review the basic facts about computable $C[0,1]$ functions, the ordinals below $\omega_1^{CK}$, and $\Sigma_\alpha$ sets.  Then we introduce Kechris and Woodin's differentiability rank, and present what is known about $D$.  In Section \ref{sec2} we redefine the differentiability rank in a more
computationally convenient way, and use this definition to demonstrate
$\{c : c \text{ codes } f \text{ with } |f|_{KW}< \alpha+1\}$ is $\Pi_{2\alpha+1}.$  The meat of the paper is in Section \ref{sec3}, where
we address the question of completeness to prove both theorems above.

I would like to thank Theodore Slaman for many useful conversations
and a simpler proof of Lemma \ref{g0}, Ian Haken for carefully
reading a draft and providing valuable suggestions, and Rod Downey and an anonymous reviewer for
their good advice on presentation and digestibility.  However, any
errors or expository flaws are entirely the responsibility of the author.

\section{Preliminaries}\label{preliminaries}

This section provides background, essential definitions, methods
previously used to construct functions of different ranks, and
corollaries that are straightforward effectivizations of arguments in
the literature.  In section \ref{sec2.1} we establish some notation and review the basic facts about computable $C[0,1]$ functions.  In Section \ref{sec2.2} we introduce the recursive ordinals and use them to define $\Sigma_\alpha$-completeness.  In Section \ref{sec2.3} we define Kechris and Woodin's differentiability rank.  In Section \ref{buildingblocks} we familiarize the reader with the building blocks used in \cite{kw} to construct functions of arbitrary rank, as these essential elements are taken for granted in what follows.  In Section \ref{sec2.5} we establish more notation that is used throughout the paper.  Finally, in Section \ref{sec2.6} we present some necessary facts about computable differentiable functions that can be obtained by effectivizing existing work.

\subsection{Basic notions and encoding $C[0,1]$ functions}\label{sec2.1}

We use $\phi_e$ to denote to the $e$th Turing functional, and $W_e$
refers to the domain of $\phi_e$.  The jump of $X\in 2^\omega$ is
written $X'$, and the $n$th jump of $X$ is written $X^{(n)}$.  Turing reducibility is denoted by $\leq_T$ and one-reducibility by $\leq_1$.  We use $\langle n_1,\dots,n_k\rangle$ to denote a
single integer which represents the tuple $(n_1,\dots,n_k)$ according
to some standard computable encoding.  If $m$ is a number and $\sigma
= \langle n_1,\dots,n_k\rangle$ is a sequence, $m^\smallfrown \sigma$
and $\sigma^\smallfrown m$ denote $\langle m, n_1, \dots, n_k\rangle$
and $\langle n_1,\dots,n_k, m\rangle$ respectively.  If $T\subseteq
\mathbb{N}^{<\mathbb{N}}$ is a tree, let $T_n$ denote $\{\sigma :
n^\smallfrown\sigma \in T\}$, the $n$th subtree of $T$.  If $T$ is
well-founded, $|T|$ denotes its rank.

We identify the computable functions with the computable reals that
encode those functions.  Following \cite{kw}, all our functions are
real-valued with domain $[0,1]$.  For the encoding we use Simpson's
definition from \cite{simpson} because this encoding makes it
straightforward to determine the degree of unsolvability of various
statements.  For example, we will observe that ``$\phi_e$ encodes a
computable $C[0,1]$ function'' is $\Pi_2$.  However, the exact details
of the Simpson encoding are not needed beyond this section, and any of
the many equivalent definitions for a computable real-valued function
can be safely substituted.

In the following definition, $(a,r)\Phi(b,s)$ is shorthand for
$\exists n ((n,a,r,b,s,)\in \Phi)$, and $(a,r)<(a',r')$ means that
$|a-a'|+r' < r$.  The idea is that $(a,r)\Phi(b,s)$ should mean that
$f(B(a,r))\subseteq \overline{B(b,s)}$.

\begin{definition}\label{sdef}
  A code for a continuous partial function $f$ from $[0,1]$ to $\R$ is
  a set of quintuples $\Phi\subseteq \Nat \times \Q\cap[0,1] \times
  \Q^+ \times \Q \times \Q^+$ which satisfies:
\begin{enumerate}
\item if $(a,r)\Phi(b,s)$ and $(a,r)\Phi(b',s')$ then $|b-b'|\leq s+ s'$
\item if $(a,r)\Phi(b,s)$ and $(a',r')<(a,r),$ then $(a',r')\Phi(b,s)$
\item if $(a,r)\Phi(b,s)$ and $(b,s)<(b',s')$, then $(a,r)\Phi(b',s')$.
\end{enumerate}
\end{definition}

This set $\Phi$ is coded as a subset of $\mathbb{N}$ using the
standard encoding.  Some important facts can be seen from this
definition.  Firstly, it is $\Pi_2$ to check whether a given real
satisfies the above properties.  Secondly, the reals satisfying the
above might not represent total functions.  That is, for some points
$x$ in $[0,1]$ and some $\varepsilon$ there may not be an $a,r,b$ such
that $|x-a|<r$ and $(a,r)\Phi(b,\varepsilon)$.  However if the real
does represent a total function then, by the compactness of $[0,1]$,
for each $\varepsilon$ there is a finite set
$\{(a_i,r_i,b_i,s_i)\}_{i<p}$ such that the $(a_i,r_i)$ cover $[0,1]$
and for each $i$, $s_i\leq\varepsilon$ and $(a_i,r_i)\Phi(b_i,s_i)$.
Therefore, ``$\phi_e$ encodes a $C[0,1]$ function'' is a
$\Pi_2$ statement: $\phi_e$ is total, and the corresponding real satisfies Definition \ref{sdef}, and for all $\varepsilon$ there is a finite cover as described above.  Let $f_e$ denote the $C[0,1]$ function encoded by $\phi_e$.  Note that any function encoded using this convention
is, by necessity, continuous.

If $f$ is any computable $C[0,1]$ function and $z$ and $x$ any rational
numbers, the statement $f(x)>z$ is $\Sigma_1$, because $f(x)>z$ if and
only if there are $\delta$, $b$ and $\varepsilon$ such that
$(x,\delta)\Phi(b,\varepsilon)$ and $b-\varepsilon > z$.

We will also freely make use of the fact that addition,
multiplication, division, and composition of computable functions are
computable.  For details we refer the reader to \cite{simpson}.

\subsection{Kleene's $\kO$ and the notion of a $\Sigma_\alpha$-complete set}\label{sec2.2}

Kleene's $\kO$ is a way of
encoding the recursive ordinals as natural
numbers.  First one defines a relation $<_\kO$ on $\Nat$ as the least
relation closed under the following properties:
\begin{enumerate}
\item $1<_\kO 2.$
\item If $a<_\kO b$ then $b <_\kO 2^b$.
\item If $\phi_e(n)$ is total and $\phi_e(n)<_\kO\phi_e(n+1)$ for all $n$, then  $\phi_e(n)<_\kO3\cdot5^e$ for all $n$.
\item If $a<_\kO b$ and $b<_\kO c$ then $a<_\kO c.$ 
\end{enumerate}
The field of this relation is called Kleene's $\kO$.  One can show
that $\kO$ is a $\Pi_1^1$-complete set, that $<_\kO$ is well-founded,
and for each $a\in\kO$, the set $\{b : b<_\kO a\}$ is well ordered and
computably enumerable.  (See \cite{sacks} for details).  Therefore,
for each $a\in\kO$ there is a well-defined ordinal $|a|_\kO =
\hbox{ot}(\{b : b<_\kO a\})$.  In this situation $a$ is called an \emph{ordinal notation} for $|a|_\kO$.  If an ordinal has an ordinal
notation in $\kO$, it is called a \emph{constructive ordinal}.  Note
that there are infinitely many ordinal notations corresponding to each
constructive ordinal $\alpha \geq \omega$.  There are only countably
many constructive ordinals and these form an initial segment of the
ordinals.  The least nonconstructive ordinal is called
$\omega_1^{CK}$, ``the $\omega_1$ of Church and Kleene''.

We will use the fact that it is computable to add  ordinal
notations in a way that is consistent with their corresponding
ordinals.  

The constructive ordinals have an important equivalent
characterization.  They are exactly the ranks of the recursive
well-founded relations.  This will be used to establish that the
differentiability ranks of the computable functions are the
constructive ordinals.

We recall the Levy hierarchy for $n<\omega$.  A set $X$ is said to be $\Sigma_n$ (respectively $\Pi_n$) if $X\leq_1 \emptyset^{(n)}$ (respectively $\overline{\emptyset^{(n)}}$), and $X$ is $\Sigma_n$-complete if $X \equiv_1 \emptyset^{(n)}$ (and similarly for $\Pi_n$-completeness).

The ordinal notations provide a natural way to extend the notion of
the Turing jump, and thus the Levy hierarchy, through the ordinals
less than $\omega_1^{CK}$.  Define $H_1=\emptyset$, $H_{2^b} =
(H_b)'$, and $H_{3\cdot 5^e} =
\{\langle x, n \rangle : x \in H_{\phi_e(n)}\}$.  Spector
\cite{spector} showed that if $|a|_\kO = |b|_\kO$, then $H_a \equiv_T
H_b$.  Therefore, $H_{2^a}\equiv_1 H_{2^b}$, and thus there is a
well-defined notion of one-reducibility and completeness at the
successor levels.  We define the notions of $\Sigma_\alpha$ and $\Pi_\alpha$ for infinite ordinals following \cite{ashknight}:
\begin{definition}
  Let $\alpha <\omega_1^{CK}$ be an infinite ordinal and let $X \in
  2^\omega$.  Then $X$ is $\Sigma_\alpha$ if $X \leq_1 H_{2^a}$ for any
  $a$ such that $|a|_\kO=\alpha$, and $X$ is $\Sigma_\alpha$-complete
  if $X\equiv_1 H_{2^a}$ for any such $a$.  The $\Pi_\alpha$ and
  $\Pi_\alpha$-complete sets are defined similarly.
\end{definition}
Note that using this definition, $(\emptyset^{(\omega)})'$ is a
$\Sigma_\omega$-complete set.  There is a conflicting notational
convention, found in \cite[pg. 259]{soare}, in which
$(\emptyset^{(\omega)})'$ is classified $\Sigma_{\omega+1}$-complete,
and the symbol $\Sigma_\omega$ is not defined.  We prefer the notation
of \cite{ashknight} because it is more consonant with the definition
of the rank function.  As will be seen, to determine whether the core
rank-ascertaining process terminates at a limit stage, it is necessary
to use a quantification over the results of the previous stages, not
merely a unified presentation of them.

We fix a particular (but arbitrary) path $\mathcal{P}$ through $\kO$
and define $\emptyset^{(\alpha)}$ for each $\alpha < \omega_1^{CK}$ by
$\emptyset^{(\alpha)} = H_a$, where $a$ is the unique $a\in\mathcal{P}$ such
that $|a|_\kO = \alpha$.  (We call $\mathcal{P}$ a path through $\kO$
if $\mathcal{P}\subseteq \kO$ is $<_\kO$-linearly ordered and contains
an ordinal notation for each $\alpha<\omega_1^{CK}$.)

Because $\emptyset^{(\alpha+1)}$ is the canonical
$\Sigma_\alpha$-complete set when $\alpha>\omega$, we
follow \cite{noamconvention} in defining $$\emptyset_{(\alpha)}
= \begin{cases} \emptyset^{(\alpha)} & \text{ if
} \alpha<\omega \\ \emptyset^{(\alpha+1)} & \text{ if
} \alpha \geq \omega \end{cases}$$ so that $\emptyset_{(\alpha)}$ is
always the canonical $\Sigma_\alpha$-complete set.  In addition, we
identify $\alpha$ with the relevant ordinal notation, which in this
paper is the notation $a\in P$ such that $H_a = \emptyset_{(\alpha)}$.
(Thus infinite $\alpha$ are identified with the $a$ such that $|a|_\kO
= \alpha + 1$).  This choice greatly simplifies the presentation in
Section \ref{sec3} by removing the need to explicitly and constantly
deal with the non-uniformity between the finite and the infinite
discussed here.

As we are in the business of establishing the
$\Pi_\alpha$-completeness of various sets, we will construct
reductions to and from $\emptyset_{(\alpha)}$ for various values of $\alpha$.  All of our
reductions will be either to some $\emptyset_{(\alpha)}$ or to index sets.  Since all
sets of these kinds permit padding, it will suffice to find many-one
reductions, and this is what we do.  We use the technique of effective
transfinite recursion which is described in detail in \cite{sacks}.
For our purposes it can be stated as follows:

\begin{thm}
  Let $I:\omega\rightarrow\omega$ be a recursive function, and
  suppose for all $e\in\mathbb{N}$ and all $x\in \mathcal{P}$,
  if $\phi_e(y)$ is defined for all $y\in \mathcal{P}$ such that $y<_\kO x$, then $\phi_{I(e)}(x)$ is
  defined.  Then for some $c$, $\phi_c(x)$ is defined for all $x\in \mathcal{P}$, and $\phi_c (x) = \phi_{I(c)}(x)$ for any $x$ on
  which either converges.
\end{thm}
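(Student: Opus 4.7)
The plan is to combine Kleene's Recursion Theorem with a transfinite induction along $<_\kO$ restricted to $\mathcal{P}$.

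First, apply the Recursion Theorem to the total recursive function $I$ to obtain an index $c$ satisfying $\phi_c \simeq \phi_{I(c)}$ as partial computable functions; that is, the two partial functions have the same domain and agree on it. This fixed-point equality immediately yields the clause ``$\phi_c(x) = \phi_{I(c)}(x)$ for any $x$ on which either converges'' in the conclusion, reducing the remaining task to showing that $\phi_c(x)$ is defined for every $x \in \mathcal{P}$.

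Second, carry out a transfinite induction along $<_\kO \uhr \mathcal{P}$, which well-orders $\mathcal{P}$ since $<_\kO$ is well-founded and $\mathcal{P}$ is $<_\kO$-linearly ordered. At stage $x \in \mathcal{P}$, the inductive hypothesis is that $\phi_c(y)$ converges for every $y \in \mathcal{P}$ with $y <_\kO x$. The hypothesis of the theorem, applied with $e = c$, then yields that $\phi_{I(c)}(x)$ converges, and the fixed-point equality delivers convergence of $\phi_c(x)$. The base case, where no $y \in \mathcal{P}$ lies below $x$ in $<_\kO$, is handled identically: the premise of the hypothesis is satisfied vacuously.

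The proof is short and presents no real obstacle. The one point worth flagging is that the theorem's hypothesis on $I$ must be \emph{uniform in $e$}; this uniformity is exactly what allows us to substitute the Recursion-Theorem index $c$ into the premise after the fact and thereby close the induction. Without uniformity in $e$, the fixed-point index could not be fed back into the hypothesis, and the argument would fail.
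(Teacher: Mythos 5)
Your proof is correct and is exactly the standard argument for effective transfinite recursion: obtain a fixed point $c$ with $\phi_c \simeq \phi_{I(c)}$ via Kleene's Recursion Theorem, then show $\phi_c$ is defined on all of $\mathcal{P}$ by transfinite induction along $<_\kO$ restricted to $\mathcal{P}$. The paper gives no proof of its own, deferring to Sacks, where this same argument appears; your closing remark about the uniformity in $e$ being what allows the fixed-point index to be fed back into the hypothesis is precisely the right point to flag.
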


When we use this technique, the function $I$ will be defined only implicitly.

\subsection{Kechris and Woodin's differentiability rank}\label{sec2.3}
Kechris and Woodin \cite{kw} define a
rank on differentiable $C[0,1]$ functions as follows.  Let $\Delta_f(x,y)$ denote the
secant slope $$\Delta_f(x,y) = \frac{f(x)-f(y)}{x-y}.$$  They define a ``derivative'' operation, which is given below.  This operation starts with a closed set of points $P$ and removes from it some points at which $f$ seems to be differentiable.  A point $x$ is removed if the oscillation of $f'$ near $x$ is no more than the given $\varepsilon$.
\begin{definition}\label{kwdefinition} Given a closed set $P$, a function $f\in C[0,1]$ and $\varepsilon>0$,
  \begin{multline*} P'_{f,\varepsilon} = \{ x \in P : \forall \delta>0
    \exists p<q, r<s \in B(x, \delta)\cap[0,1] \\ \hbox{ with }
    [p,q]\cap[r,s]\cap P \neq \emptyset \hbox{ and } |\Delta_f(p,q) -
    \Delta_f(r,s)| \geq \varepsilon \}\end{multline*} where all the
  quantifiers range over rational numbers.  \end{definition} If $P$ is
closed, then $P'$ is closed as well, so for each $f\in C[0,1]$ and each $\varepsilon>0$ one defines the following
inductive hierarchy:
\begin{align*}
P^0_{f,\varepsilon} &= [0,1]\\ P_{f,\varepsilon}^{\alpha+1} &= (P_{f,\varepsilon}^\alpha)_{f,\varepsilon}' \\ P^\lambda_{f,\varepsilon} &= \cap_{\alpha<\lambda} P_{f,\varepsilon}^\alpha \hbox{ for a limit } \lambda
\end{align*}
Kechris and Woodin showed that for any $f\in C[0,1]$, $f$ is
differentiable if and only if \mbox{$\forall n \exists \alpha
  <\omega_1 (P_{f,1/n}^\alpha = \emptyset)$.}  Considering the supremum of
all such $\alpha$, they make the following definition:
\begin{definition}
  For each differentiable $f\in C[0,1]$, define
  $|f|_{KW}$ to be the least ordinal $\alpha$ such that $\forall
  \varepsilon P_{f,\varepsilon}^\alpha = \emptyset$.
\end{definition}

For example, if $f$ is any continuously differentiable function, then
$|f|_{KW}=1$, the least possible.  To see that $P^1_{f,\varepsilon} =
\emptyset$ for any such $f$ and any $\varepsilon$, let $\delta$ be
s.t. $|f'(z)-f'(y)|<\varepsilon$ whenever $|z-y|<\delta$.  Then for
any $x$ and any $p<q,r<s \in B(x,\delta/2)$, the Mean Value Theorem
provides $y\in[p,q]$ and $z\in[r,s]$ such that $f'(y)=\Delta_f(p,q)$
and $f'(z)=\Delta_f(r,s)$, so
$|\Delta_f(p,q)-\Delta_f(r,s)|<\varepsilon$ and $x\notin
P^1_{f,\varepsilon}$.  A common example of a differentiable function
whose derivative is not continuous is $x^2\sin(1/x)$, and this
function has differentiability rank 2.

\subsection{Basic building blocks}\label{buildingblocks}\label{sec2.4}

Kechris and Woodin show that for each ordinal $\alpha$, there is a
function with rank $\alpha$, and in order to show this they construct
an explicit $f$ with that rank.  This section gives a summary of the building blocks that they used to produce an example of a function living at each level of their hierarchy.  We will use the same building blocks in a more complicated construction in Section \ref{sec3}.

The most natural way of constructing a function while controlling its rank is to build it up recursively from smaller pieces.  Our basic building block is a simple continuously differentiable bump (Figure \ref{bumpfig}).

\begin{figure}
\begin{center}
\begin{tabular}{cc}
\includegraphics[width=\figwidth]{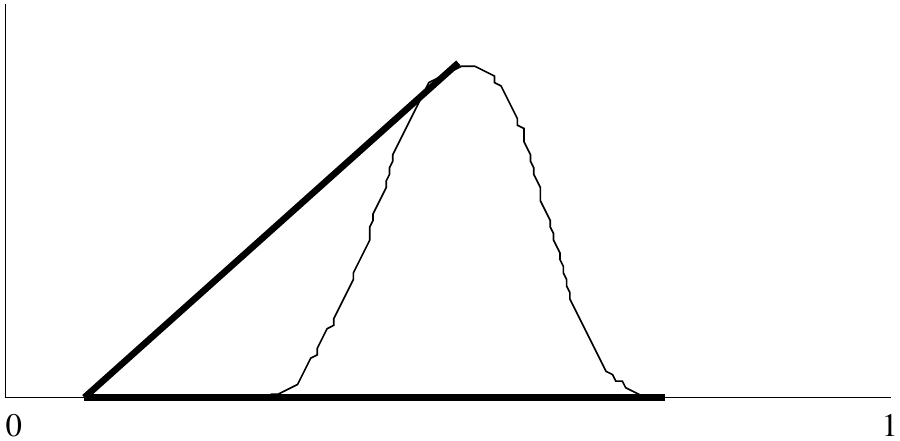} & 
\includegraphics[width=\figwidth]{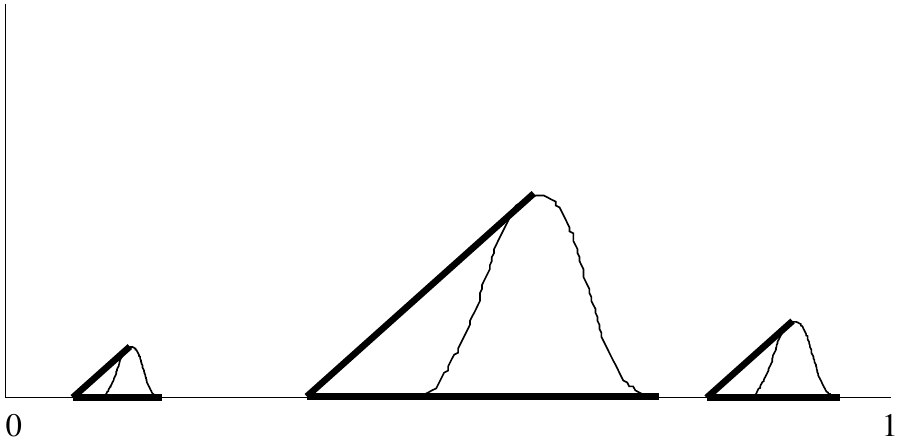} \\ (a) & (b)
\end{tabular}
\end{center}
\caption{(a) A continuously differentiable bump with one secant of slope zero and one secant of positive slope.  (b) Resized copies of this bump with proportions preserved.}
\label{bumpfig}
\end{figure}

Observe a certain pair of secants made by the existence of the bump,
one with slope zero and one with positive slope.  We build functions
out of resized copies of this same bump, always preserving the
proportions to keep the corresponding slopes uniform.  In Definition
\ref{kwdefinition} there is a free parameter $\varepsilon$, and one
compares various secants to see if their slope difference is at least
$\varepsilon$.  Therefore, by choosing a single sufficiently small
value for $\varepsilon$, all the secant pairs induced by the bumps
are made visible for the purposes of the rank-ascertaining process.  We
will sometimes refer to $\varepsilon$ as the \emph{oscillation
  sensitivity} because it sets the threshold above which oscillations
in the value of the derivative matter.

A simple rank 2 function is pictured in Figure \ref{rank2fig}.  To keep $0$ from being removed at the first iteration, we put a bump (and thus a disagreeing pair of secants) in every neighborhood of $0$.  To ensure the function remains differentiable at $0$ despite all the oscillation, we make the bumps small enough to fit inside an envelope of $x^2$.  The resulting rank 2 function can itself be proportionally shrunk and used as a building block in functions of larger rank.

\begin{figure}
\begin{center}
\begin{tabular}{cc}
\includegraphics[width=\figwidth]{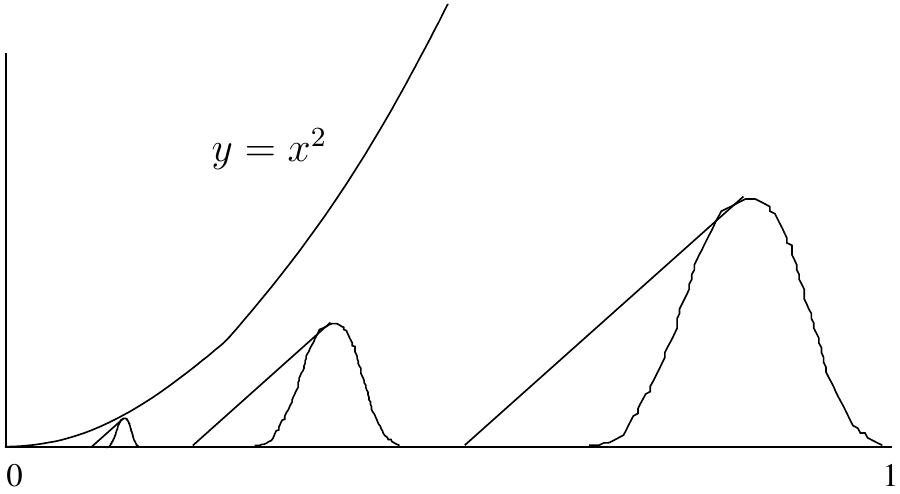} & 
\includegraphics[width=\figwidth]{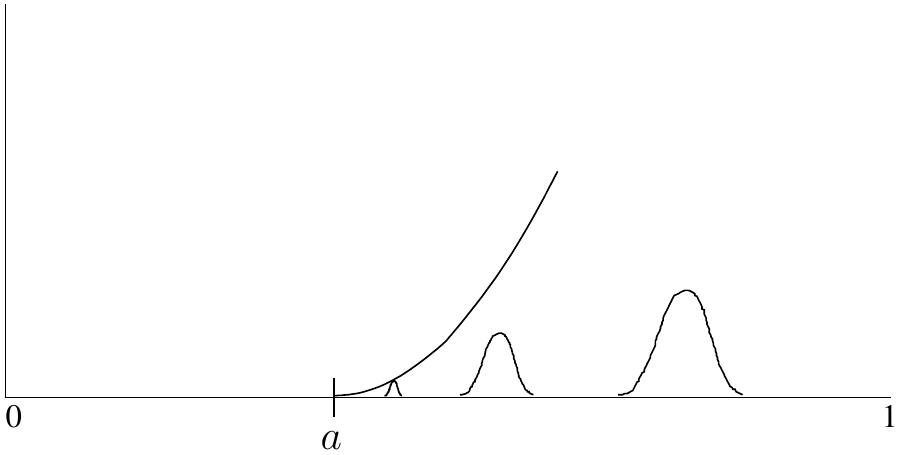} \\ (a) & (b)
\end{tabular}
\end{center}
\caption{(a) A simple differentiable function of rank 2.  (b) A shifted and resized copy of this function, which fits in a small neighborhood of the point $a$ and keeps $a$ alive through the first iteration.}
\label{rank2fig}
\end{figure}

The reason $0$ is removed at the second iteration, despite infinitely
many pairs of disagreeing secants, is that $P^1$ contains no points
which lie in the intersection $[p,q]\cap [r,s]$, where $p,q,r,s$ are
the endpoints of the intervals defining the disagreeing secant pair as
shown in Figure
\ref{shadows}.  
But if we have a rank $\alpha+1$ function to use as a building block
(the rank must be a successor for reasons discussed below), we can
make $0$ survive the $(\alpha+1)$st iteration.  By putting a shrunken
copy of our rank $\alpha+1$ function in $[p,q]\cap[r,s]$ as shown in
Figure \ref{shadows}, we construct a function of rank $\alpha+2$.  We
say that we have put the rank $\alpha+1$ function in the \emph{shadow} of
each bump.  In fact, it would suffice to put a rank $\alpha + 1$
function in the shadow of infinitely many of the bumps, and this is
done later in the paper.

\begin{figure}
\begin{center}
\begin{tabular}{cc}
\includegraphics[width=\figwidth]{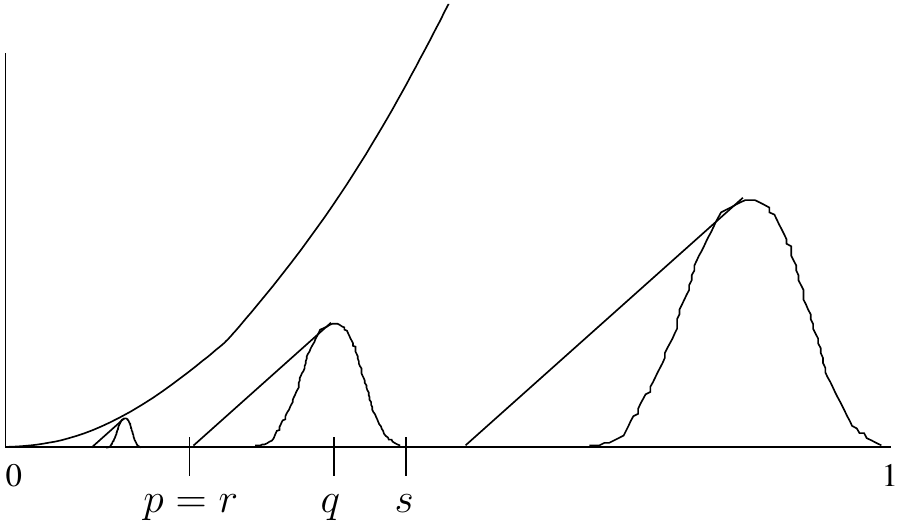} & 
\includegraphics[width=\figwidth]{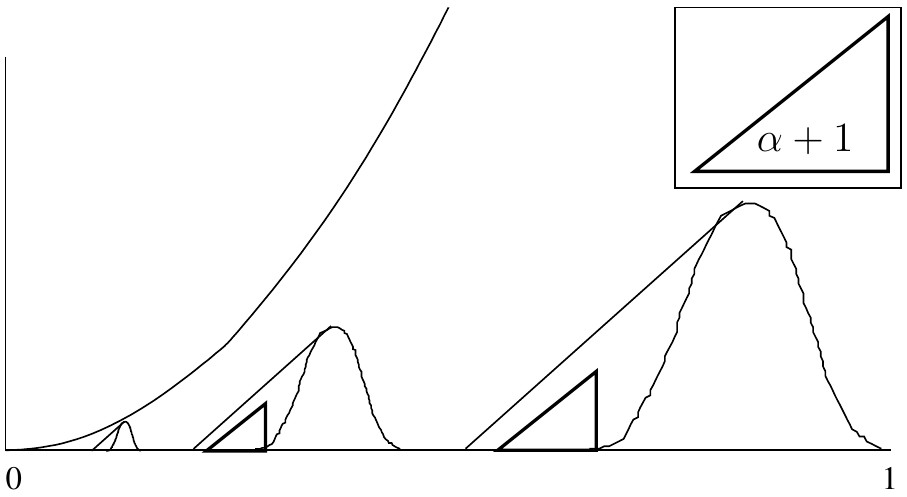} \\ (a) & (b)
\end{tabular}
\end{center}
\caption{(a) Points $p,q,r,s$ as used in Definition \ref{kwdefinition}. (b) A differentiable function of rank $\alpha+2$.  The triangle represents a function of rank $\alpha+1$.}
\label{shadows}
\end{figure}

Next we describe how to make functions of rank $\lambda+1$ and rank $\lambda$, where $\lambda$ is a limit ordinal.
We say that an oscillation sensitivity $\varepsilon$ \emph{witnesses} the rank of a function $f$ if $|f|_{KW}=\alpha$ and $P_\varepsilon^\beta \neq \emptyset$ for all $\beta<\alpha$.  Note that if a function has successor rank, there is always an $\varepsilon$ that witnesses this, but if the function has limit rank, there cannot be a witness.

Suppose we have a sequence of functions, with ranks cofinal in $\lambda$, whose ranks are all witnessed at a uniform sensitivity $\varepsilon$.  As shown in Figure \ref{limitfig}, a function of rank $\lambda + 1$ can be made by putting proportionally shrunken copies of functions of increasing rank in each neighborhood of $0$.  The rank of the resulting function is witnessed by the same $\varepsilon$.  

\begin{figure}
\begin{center}
\begin{tabular}{cc}
\includegraphics[width=\figwidth]{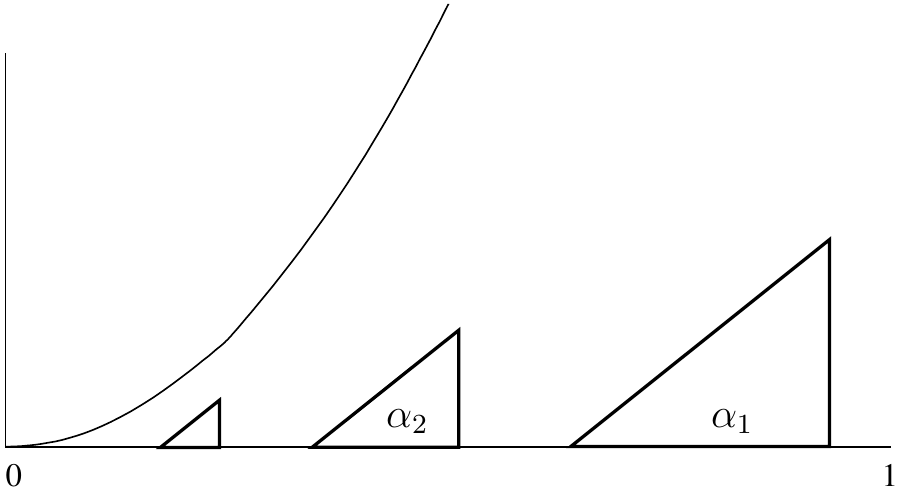} & 
\includegraphics[width=\figwidth]{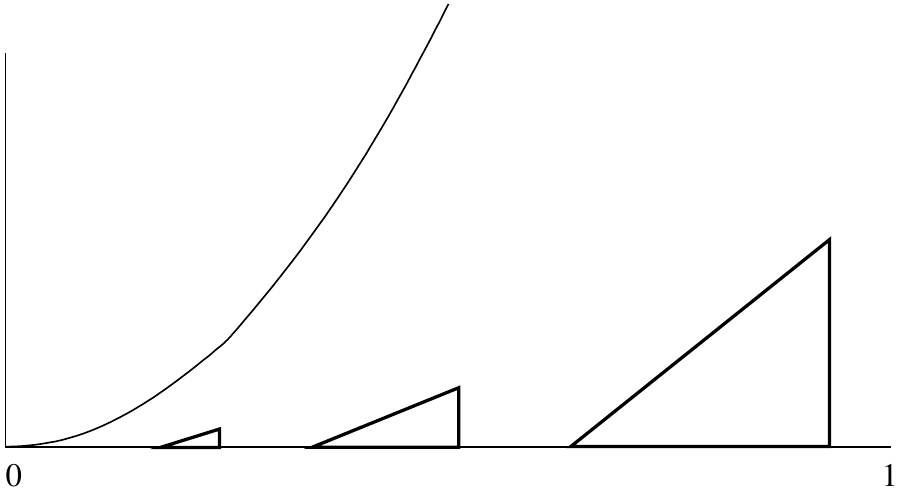} \\ (a) & (b)
\end{tabular}
\end{center}
\caption{(a) A function of rank $\lambda+1$ for $\lambda$ a limit ordinal. (b) A function of rank $\lambda$.}
\label{limitfig}
\end{figure}

By recursively applying the $\alpha+2$ step and the $\lambda+1$ step,
we can build functions of any successor rank.  To make a function of
rank $\lambda$, we must start with a sequence of functions with
uniformly bounded derivatives, whose ranks are cofinal in $\lambda$.
Because the derivatives are uniformly bounded, their possible secant
slope differences are also uniformly bounded by the Mean Value
Theorem.  Again we use shrunken copies of functions from the sequence,
but in addition to shrinking the $n$th function proportionally, we
also scale it vertically by a factor of $\frac{1}{n}$.  In the
resulting function, as $x$ approaches $0$ the nearby secant slope
differences approach zero, which has the effect of ensuring that $0$
is removed at the first iteration no matter what the oscillation
sensitivity.

Functions whose ranks are limit ordinals do not make good building blocks for more complicated functions because there is no $\varepsilon$ that witnesses their rank.  If we construct a rank $\lambda + 1$ function $f$, there needs to be a $\varepsilon$ such that $P^\lambda_{\varepsilon, f} \neq \emptyset$.  If we used a rank $\lambda$ function $g$ as a building block, then by compactness there would have to be some $\beta<\lambda$ such that $P^\beta_{\varepsilon,g}=\emptyset$.  So a function of rank $\beta$ would have been equally unhelpful.  That explains why, in our construction of the rank $\alpha+2$ function above, we needed to use a function with successor rank $\alpha+1$ as a building block.

\subsection{Notation}\label{sec2.5}
The following notations are used throughout.
\begin{definition}
  For each ordinal $\alpha$, let $D_\alpha$ denote the set of all
  indices $e$ such that $f_e\in C[0,1]$ is
  differentiable with $|f_e|_{KW}<\alpha$.  Define $D = \cup_{\alpha<\omega_1} D_\alpha$.
\end{definition}

  For any function $f\in C[0,1]$, we write $f[a,b]$ to denote the function
  which is identically 0 outside of $[a,b]$, and for $x\in [a,b]$,
  $f[a,b](x) = (b-a)f(\frac{x-a}{b-a})$.  Note that if $f$ is
  continuous and $f(0)=f(1)=0$, then $f[a,b]$ is continuous; it is
  computable when $f,a,$ and $b$ are and differentiable when $f$ is
  differentiable and $f'(0)=f'(1)=0$.

Similarly, for any real number $c\in[0,1]$ and any interval $[a,b]$,
let $c[a,b] = a + c(b-a)$.  This notation comes in handy when talking
about scaled down versions of functions, because $(b-a)f(c) =
f[a,b](c[a,b])$.  Also, this scaling preserves a function's
proportions ($f[a,b]'(c[a,b])= (b-a)f'(c)\frac{1}{b-a} = f'(c)$), so
$||f'|| = ||f[a,b]'||$ for any interval $[a,b]$.

\subsection{Facts about $D$}\label{sec2.6}

In section \ref{buildingblocks}, we described the major
components of Kechris and Woodin's construction of an explicit $f$
with $|f|_{KW}=\alpha$ for each $\alpha$.  When
$\alpha<\omega_1^{CK}$, their construction by transfinite recursion
easily effectivizes.  Therefore their argument also shows that for
each constructive $\alpha$, there is a computable differentiable $f$
with rank $\alpha$.

On the other hand, every computable differentiable function has
constructive rank.  This follows from work in the same paper by
Kechris and Woodin.

\begin{definition}
Let $\mathcal{D}$ denote the set of differentiable functions in $C[0,1]$.
\end{definition}

\begin{definition}
For each function $f\in C[0,1]$ and each $\varepsilon\in\mathbb{Q}^+$, define a tree $S_f^\varepsilon$ on $A = \{\langle p,q\rangle\ : 0\leq p < q\leq 1 \hband p,q\in \mathbb{Q}\}$ as follows: \begin{multline*}(\langle p_1,q_1 \rangle,\dots,\langle p_n,q_n\rangle) \in S_f^\varepsilon \iff \forall i \leq n (q_i-p_i \leq 1/i) \hband \cap_{i=1}^n [p_i,q_i]\neq \emptyset \\ \hband \forall i<n (|\Delta_f(p_{i+1},q_{i+1})-\Delta_f(p_i,q_i)|\geq\varepsilon).\end{multline*}
\end{definition}

Kechris and Woodin showed that for all $f\in C[0,1]$,
$f\in\mathcal{D}$ if and only if $\forall \varepsilon \in \mathbb{Q}^+
(S^\varepsilon_f \hbox{ is well-founded })$.  That makes possible the
following alternative rank definition:

\begin{definition}
  Let $f\in \mathcal{D}$.  Define $|f|^* = \sup
  \{|S^\varepsilon_f|+1: \varepsilon \in \mathbb{Q}^+\}$.
\end{definition}

\begin{lem} If $f\in \mathcal{D}$ is computable, then
  $|f|^*$ is constructive.
\end{lem}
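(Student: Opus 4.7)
The plan is to invoke the characterization from Section \ref{sec2.2} that the constructive ordinals are exactly the ranks of recursive well-founded relations. Since the constructive ordinals form an initial segment of the countable ordinals, it will suffice to exhibit a single recursive well-founded tree $T$ with $|T| \geq |f|^*$.

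The obvious candidate --- the disjoint bundle of the $S^\varepsilon_f$ over rational $\varepsilon > 0$ --- is not recursive, because the condition $|\Delta_f(p_{i+1},q_{i+1}) - \Delta_f(p_i,q_i)| \geq \varepsilon$ is only $\Pi^0_1$ in the computable reals $\Delta_f(p,q)$. So first I will thicken each $S^\varepsilon_f$ into a recursive tree $T^\varepsilon$ of the same or greater rank. Using rational approximations $\Delta^m_f(p,q)$ of the secant slopes with error at most $1/m$ (uniformly computable in $m,p,q$ because $f$ is computable), I will declare that $\sigma = (\langle p_1,q_1\rangle,\ldots,\langle p_n,q_n\rangle)$ belongs to $T^\varepsilon$ exactly when every prefix $\tau$ of length $m \leq n$ satisfies the length and intersection conditions of $S^\varepsilon_f$ together with the weakened slope inequality $|\Delta^m_f(p_{i+1},q_{i+1}) - \Delta^m_f(p_i,q_i)| \geq \varepsilon - 3/m$ for all $i < m$. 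By construction $T^\varepsilon$ is prefix-closed and uniformly recursive in $\varepsilon$, and the $1/m$ approximation bound gives $S^\varepsilon_f \subseteq T^\varepsilon$ at once.

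Next I will verify that $T^\varepsilon$ is well-founded: given any infinite branch through $T^\varepsilon$, fixing $i$ and letting the prefix length $m \to \infty$ upgrades the weakened inequality into the honest $|\Delta_f(p_{i+1},q_{i+1}) - \Delta_f(p_i,q_i)| \geq \varepsilon$, producing an infinite branch of $S^\varepsilon_f$ --- impossible since $f \in \mathcal{D}$, by the Kechris--Woodin characterization quoted just before the definition of $|f|^*$. To finish, I will fix a recursive enumeration $\varepsilon_1,\varepsilon_2,\ldots$ of $\mathbb{Q}^+$ and take $T$ to consist of the empty sequence together with every $k^\smallfrown \sigma$ with $\sigma \in T^{\varepsilon_k}$. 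Then $T$ is recursive and well-founded, and $|T| = \sup_k (|T^{\varepsilon_k}| + 1) \geq \sup_\varepsilon (|S^\varepsilon_f| + 1) = |f|^*$, so $|T|$ and hence $|f|^*$ is constructive.

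The main obstacle is calibrating the tolerance in the definition of $T^\varepsilon$. A tolerance that shrinks too quickly with $m$ will fail to admit all of $S^\varepsilon_f$; a constant or slowly-shrinking tolerance will introduce spurious infinite branches and destroy well-foundedness. The choice $3/m$, enforced at every prefix rather than only at the full length, is the simplest one that satisfies both demands simultaneously. Everything else --- uniform computability of the rational approximations, prefix-closure, and the bundling step --- is routine.
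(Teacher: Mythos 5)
Your proof is correct and follows essentially the same route as the paper: replace the $\Pi^0_1$ slope inequality by a decidable comparison of rational approximations to the secant slopes, then bundle the resulting uniformly recursive trees over $\varepsilon\in\mathbb{Q}^+$ into a single recursive well-founded tree. The only difference is calibration: the paper uses a fixed tolerance $\varepsilon/4$, which yields the two-sided sandwich $S_f^{2\varepsilon}\subseteq\tilde S_f^\varepsilon\subseteq S_f^{\varepsilon/2}$ and hence both well-foundedness and the exact equality $|f|^*=|\tilde S|$ at once, whereas your vanishing tolerance $3/m$ gives only the lower containment and needs your separate limiting argument for well-foundedness, settling for $|T|\geq|f|^*$ --- which, as you note, suffices because the constructive ordinals form an initial segment.
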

\begin{proof}Note that the tree $S^\varepsilon_f$ would be computable if one did not have to verify that $|\Delta_f(p_{i+1},q_{i+1})-\Delta_f(p_i,q_i)|\geq\varepsilon$, a $\Pi_1$ statement.  In fact such a strong statement is not needed, and to get around it we use a computable approximation. For any
  computable $g\in C[0,1]$, rational $p\in[0,1]$, and rational $\delta
  >0$, the notation $[g(p)]_{\delta}$ refers to a standard
  $\delta$-approximation of $g(p)$, which is a rational number $z$
  such that $|g(p)-z|<\delta$.  (For specificity we could say
  $[g(p)]_\delta$ is the $b$ component of the smallest $\langle n, p,
  r, b, \delta/2\rangle$ in the computable real that encodes $g$.)
  Given a computable $f$, consider the following collection of trees
  $\tilde{S}^\varepsilon_f$, which are the same as the
  $S^\varepsilon_f$ defined above, except for the use of a computable
  approximation:
  \begin{multline*}(\langle p_1,q_1 \rangle,\dots,\langle
    p_n,q_n\rangle) \in \tilde S_f^\varepsilon \iff \forall i \leq n
    (q_i-p_i \leq 1/i) \hband \cap_{i=1}^n [p_i,q_i]\neq \emptyset \\
    \hband \forall i<n
    ([|\Delta_f(p_{i+1},q_{i+1})-\Delta_f(p_i,q_i)|]_{\varepsilon/4}\geq\varepsilon).\end{multline*}
  The $\tilde S_f^\varepsilon$ are computable trees, uniformly in $f$
  and $\varepsilon$.  Furthermore, for each $\varepsilon$,
  $S_f^{2\varepsilon} \subseteq \tilde S_f^\varepsilon \subseteq
  S_f^{\varepsilon/2}$, so $|S_f^{2\varepsilon}| \leq |\tilde
  S_f^\varepsilon| \leq |S_f^{\varepsilon/2}|$.  Therefore, although
  $|f|^*$ is defined in terms of $S_f^\varepsilon$, it is also true
  that $|f|^* = \sup\{|\tilde S_f^{\varepsilon}|+1 :\varepsilon
  \in\mathbb{Q}^+\}$.  Since $\tilde S_f^\varepsilon$ are defined
  uniformly in $\varepsilon$, the tree $$\tilde S =
  \{\varepsilon^\smallfrown \sigma : \varepsilon \in \mathbb{Q}^+,
  \sigma \in \tilde S_f^\varepsilon \}$$ is also computable, and
  $|f|^* = |\tilde S|$. Therefore $|f|^*$ is constructive.
\end{proof}

\begin{thm}[\cite{kw}]
  Let $f\in \mathcal{D}$.  Then if $f$ is linear,
  $|f|_{KW}=1$, and if $f$ is not linear, $|f|^* = \omega |f|_{KW}$.
\end{thm}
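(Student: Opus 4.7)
The plan is to dispatch the linear case directly and then, for non-linear $f\in\mathcal{D}$, prove $|f|^*=\omega|f|_{KW}$ by analyzing each tree $S^\varepsilon_f$ in parallel with the Kechris--Woodin derivation process at the same oscillation sensitivity $\varepsilon$. If $f$ is linear, every secant has the same slope, so $|\Delta_f(p,q)-\Delta_f(r,s)|=0$ for all legal $p,q,r,s$, hence $P^1_{f,\varepsilon}=\emptyset$ for every $\varepsilon>0$ and $|f|_{KW}=1$.

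For the non-linear case, fix $\varepsilon\in\mathbb{Q}^+$ and let $\alpha_\varepsilon$ denote the least ordinal with $P^{\alpha_\varepsilon}_{f,\varepsilon}=\emptyset$, so that $|f|_{KW}=\sup_\varepsilon \alpha_\varepsilon$. The central step is a transfinite induction on $\beta$ establishing the correspondence: for every node $\sigma=(\langle p_1,q_1\rangle,\dots,\langle p_n,q_n\rangle)\in S^\varepsilon_f$, the rank of $\sigma$ in $S^\varepsilon_f$ is at least $\omega\beta$ if and only if $\bigcap_{i\leq n}[p_i,q_i]\cap P^\beta_{f,\varepsilon}\neq\emptyset$. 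Applied at the empty sequence, this yields $|S^\varepsilon_f|=\omega\alpha_\varepsilon$, up to the $+1$ convention in the definition of $|f|^*$, and taking the supremum over $\varepsilon$ recovers $|f|^*=\omega|f|_{KW}$.

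The forward direction of the induction follows Definition \ref{kwdefinition}: at a successor step, a point $x\in\bigcap_i[p_i,q_i]\cap P^{\beta+1}_{f,\varepsilon}$ provides, for every $\delta<1/(n+k)$, a pair of disagreeing sub-intervals $[p,q],[r,s]\subseteq B(x,\delta)$ whose intersection meets $P^\beta_{f,\varepsilon}$; chaining $k$ such extensions gives a descendant of $\sigma$ of rank at least $\omega\beta+k$, and letting $k\to\infty$ forces rank at least $\omega(\beta+1)$. The limit case uses $P^\lambda_{f,\varepsilon}=\bigcap_{\beta<\lambda}P^\beta_{f,\varepsilon}$ directly. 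The reverse direction proceeds by induction on $\beta$ as well, extracting at limit stages a common accumulation point of the high-rank extensions via compactness of $[0,1]$ and using it as a witness surviving every previous derivation level.

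The main obstacle is accounting cleanly for the factor of $\omega$: at a fixed derivation level $\beta$, the shrinking constraint $q_i-p_i\leq 1/i$ permits extensions of arbitrary finite length without consuming any further KW levels, and it is precisely this freedom that contributes an ordinal shift by $\omega$ at each derivation step. Carefully managing this bookkeeping across both inductions, and verifying that the non-linearity hypothesis is exactly what guarantees that some $\varepsilon$ produces a non-trivial $S^\varepsilon_f$ (so that the supremum defining $|f|^*$ is not artificially collapsed), is the delicate part; the rest of the argument is a transcription of the proof from \cite{kw} that $f\in\mathcal{D}$ if and only if every $S^\varepsilon_f$ is well-founded.
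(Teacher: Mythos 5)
The paper offers no proof of this statement: it is imported verbatim from Kechris and Woodin, so there is nothing in-paper to compare your argument against; I can only judge it on its own merits. Your architecture (relate the rank of a node of $S^\varepsilon_f$ to the stage at which $\bigcap_{i}[p_i,q_i]$ is emptied by the derivation process, with each derivation level contributing a block of $\omega$) is the right one, but the central lemma is false as you state it. If $x\in\bigcap_i[p_i,q_i]\cap P^{\beta+1}_{f,\varepsilon}$, Definition \ref{kwdefinition} hands you a pair $[p,q],[r,s]$ whose secant slopes differ from \emph{each other} by at least $\varepsilon$; nothing forces either of them to differ from $\Delta_f(p_n,q_n)$, the last secant of $\sigma$, by $\varepsilon$ --- only by $\varepsilon/2$ for at least one of them --- and the same mismatch recurs at every link of the chain you try to build. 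So the ``chain $k$ such extensions'' step does not go through at the same sensitivity, and the equivalence cannot hold as an ``iff at fixed $\varepsilon$.'' The standard repair is an interleaving of sensitivities (membership in $P^\beta_{f,2\varepsilon}$ buys extensions in $S^\varepsilon_f$, while extensions in $S^\varepsilon_f$ witness membership in $P^\beta_{f,\varepsilon}$), exactly the maneuver this paper performs for its own variant hierarchy in Lemma \ref{interleaving}; the discrepancy then washes out only in the supremum over $\varepsilon$.

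Separately, even granting your equivalence, it does not ``yield $|S^\varepsilon_f|=\omega\alpha_\varepsilon$.'' Each $\alpha_\varepsilon$ is a successor, say $\gamma_\varepsilon+1$ (a nested intersection of nonempty compacta is nonempty), and the equivalence only places $|S^\varepsilon_f|$ in the window from $\omega\gamma_\varepsilon$ up to but not including $\omega\gamma_\varepsilon+\omega$, i.e.\ $|S^\varepsilon_f|=\omega\gamma_\varepsilon+k_\varepsilon$ with $k_\varepsilon$ finite. The theorem then requires the further fact that the $k_\varepsilon$ are unbounded as $\varepsilon\to 0$ (along the $\varepsilon$ realizing $\sup_\varepsilon\gamma_\varepsilon$, when that supremum is attained), and this is precisely where non-linearity enters. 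The non-linear $C^1$ case makes the gap concrete: there $\alpha_\varepsilon=1$ for every $\varepsilon$ and every $|S^\varepsilon_f|$ is finite, so your claimed identity $|S^\varepsilon_f|=\omega\alpha_\varepsilon=\omega$ is simply false, and the entire content of $|f|^*=\omega$ is the unboundedness of those finite ranks --- a separate argument your correspondence does not touch and your sketch relegates to ``delicate bookkeeping.''
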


Therefore, for each computable $f$, $|f|_{KW} \in \kO$.  Thus
$$D=\bigcup_{\alpha<\omega_1^{CK}} D_\alpha.$$
By the standard definition of differentiability, $D$ is a $\Pi_1^1$
set.  Mazurkiewicz \nocite{maz} gave a reduction from well-founded
trees to differentiable functions.  This reduction, reproduced in
\cite{kw}, easily effectivizes, and therefore also serves as reduction
from $\kO$ to $D$.  Thus we know that $D$ is $\Pi_1^1$-complete.  We will
generate functions from well-founded trees using a method similar to
that of Mazurkiewicz.  By constructing the trees carefully we can
obtain finer grained results.

\section{Having rank at most $\alpha$ is $\Pi_{2\alpha+1}$}\label{sec2}

In this section, we show that ``$|f|_{KW}<\alpha+1$'' is a
$\Pi_{2\alpha+1}$ statement.  This
follows from a mostly straightforward translation of the definition of
differentiability rank into the formal language.  The only obstacle is
that the original definition needs to be slightly optimized.  In Section \ref{2.1} we give an equivalent definition of differentiability rank which uses fewer quantifiers.  In Section \ref{2.2} we formalize the sentence ``$|f|_{KW} \leq \alpha + 1$''.

\subsection{An equivalent rank function}\label{2.1}

In \cite{kw} the rank is defined using a ``derivative operation''
$P'_{f,\varepsilon}$ on sets $P$.  To prove our result we use an
almost identical operation $P^*_{f,\varepsilon}$ defined below.  The
only difference between this definition and the definition of
$P'_{f,\varepsilon}$ is that $\geq$ is replaced with $>$.  This is done in
order to make the statement $[0,1]^*_{f,\varepsilon} = \emptyset$ a
$\Sigma_2$ statement (instead of $\Sigma_3$), and this is necessary for
the base case of Proposition \ref{2n}.
\begin{definition} Given a closed set $P$, a function $f$ and $\varepsilon>0$,
\begin{multline*} P^*_{f,\varepsilon} = \{ x \in P : \forall \delta>0 \exists
 p<q, r<s \in B(x, \delta)\cap[0,1] \\ \hbox{ with }
 [p,q]\cap[r,s]\cap P \neq \emptyset \hbox{ and } |\Delta_f(p,q) -
 \Delta_f(r,s)| > \varepsilon \}\end{multline*} where all the
 quantifiers range over rational numbers.  \end{definition} 

It is easy to see that $P^*_{f,\varepsilon}$ is a closed subset of
$P$, so it makes sense to define a rank function using it.  We define
a hierarchy of closed sets analogously to \cite{kw}:

\begin{definition} \emph{($\tP^\alpha_{f,\varepsilon}(I)$ hierarchy)}
Fix a continuous function $f$, a rational $\varepsilon>0$, and a
closed set $I\subseteq [0,1]$.  
Define $\tP^0_{f,\varepsilon}(I)=I$.  Then for each ordinal $\alpha$,
define $\tP^{\alpha+1}_{f,\varepsilon}(I) = (\tP^\alpha_{f,\varepsilon}(I))^*_{f,\varepsilon}$.
If $\lambda$ is a limit ordinal, define 
$\tP^\lambda_{f,\varepsilon}(I) = \cap_{\alpha < \lambda}
\tP^\alpha_{f,\varepsilon}(I)$.
\end{definition}

In the special case $I=[0,1]$, we write $\tP^{\alpha}_{f,\varepsilon}$
instead of $\tP^{\alpha}_{f,\varepsilon}([0,1])$.  Sometimes the
function $f$ may also be omitted from the notation if it is clear from
context.

The rank of a differentiable function $f$ is defined in \cite{kw} to
be the smallest ordinal $\alpha$ such that for all $\varepsilon$,
$P^\alpha_{\varepsilon} = \emptyset$.  The next lemma shows our
$\tP^{\alpha}_{\varepsilon}$ hierarchy is similar enough to preserve
the notion.

\begin{lem}\label{interleaving} For any differentiable $f\in C[0,1]$, $\varepsilon>0$ and ordinal $\alpha$, $$\tP^\alpha_\varepsilon \subseteq P^\alpha_{\varepsilon/2}
  \subseteq \tP^\alpha_{\varepsilon/4}.$$
\end{lem}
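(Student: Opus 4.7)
The plan is a straightforward transfinite induction on $\alpha$, exploiting the fact that $P^*_{f,\varepsilon}$ and $P'_{f,\varepsilon}$ differ only in the use of strict vs.\ non-strict inequality $|\Delta_f(p,q)-\Delta_f(r,s)|\,\lozenge\,\varepsilon$. The slack between these two inequalities is absorbed by halving $\varepsilon$: on one hand $|\Delta_f(p,q)-\Delta_f(r,s)|>\varepsilon$ trivially implies $|\Delta_f(p,q)-\Delta_f(r,s)|\geq\varepsilon/2$, and on the other hand $|\Delta_f(p,q)-\Delta_f(r,s)|\geq\varepsilon/2$ trivially implies $|\Delta_f(p,q)-\Delta_f(r,s)|>\varepsilon/4$. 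The chained statement in the lemma is set up precisely so that each application of the derivative operation consumes one of these slacks, and the induction hypothesis refills it on the next round.

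The base case $\alpha=0$ is immediate, since all three sets equal $[0,1]$. For the successor step, I would assume the triple inclusion at level $\alpha$ and verify both inclusions at level $\alpha+1$ separately. For $\tilde P^{\alpha+1}_\varepsilon\subseteq P^{\alpha+1}_{\varepsilon/2}$, fix $x\in\tilde P^{\alpha+1}_\varepsilon$. Then $x\in\tilde P^\alpha_\varepsilon\subseteq P^\alpha_{\varepsilon/2}$ by the inductive hypothesis, and for every $\delta>0$ one can produce rationals $p<q$, $r<s$ in $B(x,\delta)\cap[0,1]$ with $[p,q]\cap[r,s]\cap\tilde P^\alpha_\varepsilon\neq\emptyset$ and $|\Delta_f(p,q)-\Delta_f(r,s)|>\varepsilon$. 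Applying the inductive hypothesis $\tilde P^\alpha_\varepsilon\subseteq P^\alpha_{\varepsilon/2}$ inside the intersection and using $>\varepsilon\Rightarrow\geq\varepsilon/2$ places $x$ in $P^{\alpha+1}_{\varepsilon/2}$. The inclusion $P^{\alpha+1}_{\varepsilon/2}\subseteq\tilde P^{\alpha+1}_{\varepsilon/4}$ is symmetric: from $x\in P^{\alpha+1}_{\varepsilon/2}$ I extract witnesses with $|\Delta_f(p,q)-\Delta_f(r,s)|\geq\varepsilon/2$, use $P^\alpha_{\varepsilon/2}\subseteq\tilde P^\alpha_{\varepsilon/4}$ on the intersection, and apply $\geq\varepsilon/2\Rightarrow>\varepsilon/4$.

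For the limit step $\lambda$, the three hierarchies are defined as intersections, so the inclusion at level $\lambda$ is just the intersection, over $\alpha<\lambda$, of the inclusions guaranteed by the inductive hypothesis:
\[
\tilde P^\lambda_\varepsilon=\bigcap_{\alpha<\lambda}\tilde P^\alpha_\varepsilon\subseteq\bigcap_{\alpha<\lambda}P^\alpha_{\varepsilon/2}=P^\lambda_{\varepsilon/2}\subseteq\bigcap_{\alpha<\lambda}\tilde P^\alpha_{\varepsilon/4}=\tilde P^\lambda_{\varepsilon/4}.
\]
There is no real obstacle here; the only thing to be slightly careful about is that the triple form of the inclusion in the statement (not just a single inclusion) is what makes the induction go through, since each successor step consumes one factor of $2$ on $\varepsilon$. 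Differentiability of $f$ is not actually needed for these set-theoretic inclusions, but it is harmless to assume it as in the statement.
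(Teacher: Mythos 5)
Your proof is correct and takes essentially the same route as the paper: transfinite induction with the trivial base and limit cases, and a successor step that absorbs the strict/non-strict slack by halving $\varepsilon$. The paper merely packages your inline element-chase as two stated observations (monotonicity of both derivative operations and the single-step interleaving $P^*_\varepsilon \subseteq P'_{\varepsilon/2} \subseteq P^*_{\varepsilon/4}$) before chaining them, which is the same argument.
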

\begin{proof}
The proof is by induction on $\alpha$.    When $\alpha = 0$ all these sets coincide.
Next we observe that both $'$ and $^*$ have the property that if
$P\subseteq Q$, then for any $\varepsilon$,
$P_\varepsilon'\subseteq Q_\varepsilon'\hbox{ and }P^*_\varepsilon \subseteq
Q^*_\varepsilon.$
Also it is
easy to observe that for all $\varepsilon$ and all $P$,
$P^*_\varepsilon \subseteq P'_{\varepsilon/2} \subseteq
P^*_{\varepsilon/4}.$
So  when $\alpha =
  \beta+1$, if we assume $\tP^\beta_\varepsilon \subseteq
  P^\beta_{\varepsilon/2} \subseteq \tP^\beta_{\varepsilon/4}$ we have
\begin{align*}
\tP^\alpha_\varepsilon &= (\tP^\beta_\varepsilon)^*_\varepsilon
\subseteq (P^\beta_{\varepsilon/2})^*_\varepsilon
\subseteq
(P^\beta_{\varepsilon/2})'_{\varepsilon/2} = P^\alpha_{\varepsilon/2}
\\ P^\alpha_{\varepsilon/2} &= (P^\beta_{\varepsilon/2})'_{\varepsilon/2} \subseteq (\tP^\beta_{\varepsilon/4})'_{\varepsilon/2}\subseteq (\tP^\beta_{\varepsilon/4})^*_{\varepsilon/4} =
\tP^\alpha_{\varepsilon/4}
\end{align*}
Finally, when $\lambda$ is a limit, $\cap_{\alpha < \lambda}
\tP^\alpha_\varepsilon \subseteq \cap_{\alpha<\lambda}
P^\alpha_{\varepsilon/2} \subseteq \cap_{\alpha<\lambda}
\tP^\alpha_{\varepsilon/4}$ follows because $\tP^\alpha_\varepsilon \subseteq P^\alpha_{\varepsilon/2}
  \subseteq \tP^\alpha_{\varepsilon/4}$ holds
for all $\alpha < \lambda$.  
\end{proof}

From Lemma \ref{interleaving} it is clear that for all $\alpha$, $$\forall \varepsilon
P^\alpha_\varepsilon = \emptyset \iff \forall \varepsilon
\tP^\alpha_\varepsilon = \emptyset,$$ and thus the notion of rank
defined using the $P^\alpha_\varepsilon$ hierarchy coincides with the
notion of rank defined using the $\tP^\alpha_\varepsilon$ hierarchy.

\subsection{The formal statements ``${\mathbf{|f|_{KW}\leq \alpha+1}}$''}\label{2.2}

Before we can use the previous section's definition to formalize
``$|f|_{KW}\leq\alpha + 1$'', we need the following lemma.  Briefly, the
lemma holds because membership in $\tP^\alpha_\varepsilon(I)$ is a
local property.

\begin{lem}\label{local1} Fix $f$ and $\varepsilon$.  For any closed $I\subseteq [0,1]$, any closed interval $[i,j]$, and any $\alpha$,
$$[i,j]\cap \tP^\alpha_{\varepsilon}(I) = \bigcap_{d>0}
  \tP^\alpha_{\varepsilon}([i-d,j+d]\cap I).$$
\end{lem}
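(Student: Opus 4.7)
The plan is to prove the lemma by transfinite induction on $\alpha$, with the crux being a separate \emph{locality} observation about the starred operation. Specifically, I would first establish the following auxiliary statement: whenever $V\subseteq \R$ is open and $J,K\subseteq[0,1]$ are closed sets with $J\cap V = K\cap V$, then for every $\alpha$, $\tP^\alpha_{f,\varepsilon}(J)\cap V = \tP^\alpha_{f,\varepsilon}(K)\cap V$. This captures the intuition that only a neighborhood of $x$ is relevant to whether $x\in \tP^\alpha_{f,\varepsilon}(J)$, which is exactly what Lemma \ref{local1} asserts in the special setting $J=[i-d,j+d]\cap I$, $K=I$.

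The locality statement is itself proved by transfinite induction on $\alpha$. The base case is immediate from $\tP^0_{f,\varepsilon}(J)=J$. At a successor, fix $x\in V$; since $V$ is open around $x$, for all sufficiently small $\delta>0$ we have $B(x,\delta)\subseteq V$, so any witnesses $p<q, r<s\in B(x,\delta)$ satisfy $[p,q]\cap[r,s]\subseteq V$. Therefore the condition ``$[p,q]\cap[r,s]\cap \tP^\alpha_{f,\varepsilon}(J)\neq\emptyset$'' from the definition of $^*$ is equivalent to ``$[p,q]\cap[r,s]\cap \tP^\alpha_{f,\varepsilon}(K)\neq\emptyset$'' by the inductive hypothesis, and $x\in \tP^\alpha_{f,\varepsilon}(J)\iff x\in \tP^\alpha_{f,\varepsilon}(K)$ for the same reason. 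The limit case is immediate because intersections commute with intersection against $V$. The main subtlety here is the successor step: one must take care that the existential witnesses may be assumed to lie inside $V$, which is fine because the quantifier ``for all $\delta>0$'' in the definition of $^*$ is equivalent to ``for all sufficiently small $\delta>0$''.

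To finish, set $V_d = (i-d, j+d)$. Then $([i-d,j+d]\cap I)\cap V_d = I\cap V_d$, so locality gives $\tP^\alpha_{f,\varepsilon}([i-d,j+d]\cap I)\cap V_d = \tP^\alpha_{f,\varepsilon}(I)\cap V_d$. Since $[i,j]\subseteq V_d$ for every $d>0$, any $x\in[i,j]$ lies in $\tP^\alpha_{f,\varepsilon}([i-d,j+d]\cap I)$ iff it lies in $\tP^\alpha_{f,\varepsilon}(I)$, which gives both inclusions when restricted to $[i,j]$. Conversely, any point in $\bigcap_{d>0}\tP^\alpha_{f,\varepsilon}([i-d,j+d]\cap I)$ automatically lies in $\bigcap_{d>0}[i-d,j+d] = [i,j]$, so that intersection is contained in $[i,j]$, and the claimed equality follows.
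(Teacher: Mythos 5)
Your proof is correct, but it is organized differently from the one in the paper, so a comparison is worthwhile. The paper proves the easy inclusion directly from monotonicity of the hierarchy in $I$, and then proves the hard inclusion by transfinite induction on the statement of the lemma itself: in the successor step it chooses witnesses $p<q,r<s\in B(x,\min(\delta,d/2))\cap I$ and must then apply the inductive hypothesis to the \emph{smaller} interval $[\max(p,r),\min(q,s)]$ with an auxiliary fudge parameter $\zeta$ to push the intersection point $z$ into $\tP^\beta_\varepsilon([i-d,j+d]\cap I)$. You instead isolate a symmetric locality principle --- if $J\cap V=K\cap V$ for $V$ open then $\tP^\alpha_\varepsilon(J)\cap V=\tP^\alpha_\varepsilon(K)\cap V$ --- prove that by induction (where the successor step is clean because convexity of $B(x,\delta)$ puts $[p,q]\cap[r,s]$ inside $V$, so the inductive hypothesis applies verbatim rather than to a shrunken interval), and then deduce the lemma mechanically by taking $V_d=(i-d,j+d)$. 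Your auxiliary statement is the exact analogue, for the initial set $I$, of the paper's Lemma \ref{A} about modifying $f$ off an open set, so it fits naturally into the paper's toolkit; what it buys is an induction whose hypothesis is an equality restricted to an open set rather than a one-sided inclusion between differently-fattened intervals, which removes the $\zeta$ bookkeeping. What it costs is the small extra observations you correctly supply at the end: that $[i,j]\subseteq V_d$ for every $d>0$, and that $\tP^\alpha_\varepsilon(I')\subseteq I'$ so that the right-hand intersection is automatically contained in $[i,j]$. Both arguments are sound; yours is arguably the tidier of the two.
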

\begin{proof}
On the one hand, suppose that $x\not\in
[i,j]\cap\tP^\alpha_{\varepsilon}(I)$.  If $x\not\in
[i,j]$ then eventually $x\not\in
[i-d,j+d]$.  So assume that $x\in[i,j]$.
  Then $x\not\in \tP^\alpha_{\varepsilon}(I)$, so $x$ could not be in
  $\tP^\alpha_{\varepsilon}([i-d,j+d]\cap I)$ for any $d$,
  since $\tP^\alpha_{\varepsilon}([i-d,j+d]\cap I) \subseteq
  \tP^\alpha_{\varepsilon}(I)$ for all $\alpha$.

For the other direction we proceed by induction on $\alpha$.  The
relationship certainly holds when $\alpha = 0$.  Suppose $\alpha =
\beta + 1$ and suppose that $x \in
      [i,j]\cap\tP^\alpha_{\varepsilon}(I)$.  We wish to show that $x
      \in \tP^\alpha_{\varepsilon}([i-d,j+d]\cap I)$, so fix
      $\delta$, and we will proceed to find our witnesses.  Since
      $x\in \tP^\alpha_{\varepsilon}(I)$, let $p<q,r<s \in B(x,
      \min(\delta,d/2))\cap I$ be such that $[p,q]\cap[r,s]\cap
      \tP^\beta_\varepsilon (I) \neq \emptyset$ and
      $|\Delta_f(p,q)-\Delta_f(r,s)|>
      \varepsilon$.  Then because $x\in[i,j]$, we have these same
      $p,q,r,s \in B(x,\delta)\cap[i-d,j+d]\cap I$, and in fact,
      because $p,q,r,s$ are within $d/2$ of $[i,j]$, we have
      $p,q,r,s \in [i-d/2,j+d/2]$.  If we can show that
      $[p,q]\cap[r,s]\cap \tP^\beta_{\varepsilon}([i-d,j+d]\cap I)
      \neq \emptyset$ then we are done.

Let $z\in [p,q]\cap[r,s]\cap\tP^\beta_\varepsilon(I)$.  By the
induction hypothesis,$$z\in \bigcap_{\zeta>0}
\tP^\beta_\varepsilon([\max(p,r)-\zeta,\min(q,s)+\zeta]\cap I).$$  So
in particular $$z \in \tP^\beta_\varepsilon([\max(p,r)-d/2,
  \min(q,s)+d/2]\cap I) \subseteq
\tP^\beta_\varepsilon([i-d,j+d]\cap I).$$  This completes the proof
for the successor case.

Finally, if $\alpha$ is a limit ordinal, we have
\begin{align*}[i,j]\cap \tP^\alpha_{\varepsilon}(I)  &= \bigcap_{\beta<\alpha}[i,j]\cap \tP^\beta_\varepsilon(I) \\ &= \bigcap_{\beta<\alpha} \bigcap_{d>0} \tP^\beta_\varepsilon([i-d,j+d]\cap I) \\ &= \bigcap_{d>0} \bigcap_{\beta<\alpha} \tP^\beta_\varepsilon([i-d,j+d]\cap I) \\
&= \bigcap_{d>0} \tP^\alpha_\varepsilon([i-d,j+d]\cap I).\end{align*}

\end{proof}

The definition of the rank of a function $f$ uses transfinite
recursion in order to calculate $P^\alpha_{f,\varepsilon}$ for each
$\alpha$ while holding $\varepsilon$ fixed.  Thus, knowing the
expressive complexity of ``$|f|_{KW} \leq 1$'' does not give us a
foothold into the expressive complexity of ``$|f|_{KW} \leq 2$'',
because ``$|f|_{KW} \leq \alpha$'' does not appear as a sub-expression
of ``$|f|_{KW} \leq \alpha+1$''.  The sub-expression which does
persist, and on which it is almost appropriate to transfinitely
recurse, is ``$[i,j]\cap \tP^\alpha=\emptyset$'', where $[i,j]$ is
some arbitrary interval.  Lemma \ref{local1} lets us express this
intersection in statements of the form
``$\tP^\alpha([i,j])=\emptyset$'', and so this last expression is a
useful core concept.  Its expressive complexity is
$\Sigma_{2\alpha}$, as seen in the next proposition.

\begin{prop}\label{2n}
Let $\alpha>0$ be a constructive ordinal, $\varepsilon, i, j \in \mathbb{Q}$ with $\varepsilon>0$ and $0\leq i < j \leq 1$.
The set of $e$ such that $\tP^\alpha_{f_e,\varepsilon}([i,j])=\emptyset$ is
$\Sigma_{2\alpha}$, uniformly in $\alpha, \varepsilon, i$ and $j$.
\end{prop}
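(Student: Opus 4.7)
The plan is to proceed by effective transfinite recursion on $\alpha$, using Lemma \ref{local1} to reduce the relevant sub-expressions to instances of the inductive hypothesis. The target form is to write ``$\tP^\alpha_{f_e,\varepsilon}([i,j])=\emptyset$'' so that every occurrence of $\tP^\beta$ (for $\beta<\alpha$) appears only as a statement of the form ``$\tP^\beta_{f_e,\varepsilon}([i',j'])=\emptyset$'' with rational $i',j'$, which by induction is uniformly $\Sigma_{2\beta}$.

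For the base case $\alpha=1$, compactness of $[i,j]$ rewrites ``$[i,j]^*_{f_e,\varepsilon}=\emptyset$'' as: there exist finitely many rational balls $B(x_k,\delta_k)$ covering $[i,j]$ such that for every $k$ and every rational $p<q$, $r<s$ in $B(x_k,\delta_k)\cap[0,1]$, $|\Delta_{f_e}(p,q)-\Delta_{f_e}(r,s)|\leq\varepsilon$. The strict inequality in the definition of $^*$ makes this inner condition $\Pi_1$, so the whole formula is $\Sigma_2=\Sigma_{2\alpha}$. For the successor case $\alpha=\beta+1$, the same compactness device expresses emptiness as: there is a finite rational cover $\{B(x_k,\delta_k)\}$ of $[i,j]$ such that for every $k$ and every rational $p<q$, $r<s$ in $B(x_k,\delta_k)$, at least one of the following holds: $[p,q]\cap[r,s]=\emptyset$, or $|\Delta_{f_e}(p,q)-\Delta_{f_e}(r,s)|\leq\varepsilon$, or $[p,q]\cap[r,s]\cap\tP^\beta_{f_e,\varepsilon}([i,j])=\emptyset$. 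By Lemma \ref{local1} together with the fact that a decreasing intersection of closed subsets of $[0,1]$ is empty iff some set in the family already is, the third disjunct is equivalent to $\exists d\in\mathbb{Q}^+$ such that $\tP^\beta_{f_e,\varepsilon}([\max(p,r)-d,\min(q,s)+d]\cap[i,j])=\emptyset$, which by the inductive hypothesis is uniformly $\Sigma_{2\beta}$. Absorbing the $\Pi_1$ disjunct into $\Sigma_{2\beta}$ keeps the disjunction at $\Sigma_{2\beta}$; the $\forall$ over rationals raises it to $\Pi_{2\beta+1}$; and the outer $\exists$ over finite covers produces $\Sigma_{2\beta+2}=\Sigma_{2\alpha}$.

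For the limit case $\alpha=\lambda$, the set $\tP^\lambda_{f_e,\varepsilon}([i,j])=\bigcap_{\beta<\lambda}\tP^\beta_{f_e,\varepsilon}([i,j])$ is a decreasing intersection of closed subsets of $[0,1]$, so emptiness is equivalent to $\exists \beta<\lambda$ with $\tP^\beta_{f_e,\varepsilon}([i,j])=\emptyset$. Using the path $\mathcal{P}$ through $\kO$ to enumerate notations below $\lambda$, this c.e. disjunction of uniformly $\Sigma_{2\beta}$ statements is $\Sigma_{2\lambda}$ by the $3\cdot 5^e$ clause in the definition of $H$ at limit notations. I expect the main obstacle to be the successor step: the condition ``$[p,q]\cap[r,s]\cap\tP^\beta([i,j])=\emptyset$'' is not a priori of the form handled by the inductive hypothesis, which speaks only of $\tP^\beta$ applied to a rational interval and not to an intersection with another closed set. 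Lemma \ref{local1} is precisely what makes this substitution possible at no cost in quantifier complexity. Uniformity in $\alpha,\varepsilon,i,j$ is preserved throughout by an application of the effective transfinite recursion theorem to the implicit index function that, given a notation for $\alpha$ and the rational parameters, produces the many-one reduction to $\emptyset_{(2\alpha)}$.
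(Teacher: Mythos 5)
Your proposal follows the paper's proof essentially step for step: compactness converts emptiness of the derived set into a number-existential over finite covers, Lemma \ref{local1} together with compactness of nested closed sets rewrites $[p,q]\cap[r,s]\cap\tP^\beta_{f_e,\varepsilon}([i,j])=\emptyset$ as $\exists d\,\tP^\beta_{f_e,\varepsilon}([\max(p,r)-d,\min(q,s)+d]\cap[i,j])=\emptyset$ so that the inductive hypothesis applies, and the limit case is an effective disjunction over a fundamental sequence, exactly as in the paper. The only slip is in your base case, where the inner condition should be the disjunction ``$[p,q]\cap[r,s]\cap[i,j]=\emptyset$ or $|\Delta_{f_e}(p,q)-\Delta_{f_e}(r,s)|\leq\varepsilon$'' rather than the second disjunct alone (otherwise the formula is strictly stronger than emptiness of $[i,j]^*_{f_e,\varepsilon}$); since the omitted disjunct is computable this does not affect the count, and the statement remains $\Sigma_2$.
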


\begin{proof}
  We carry along an arbitrary index $e$ and oscillation
  sensitivity $\varepsilon$, so to reduce clutter we write $f$ instead of $f_e$, and $\tP^\alpha$ instead of $\tP^\alpha_{f,\varepsilon}$.  

  In general, when $\alpha =
  \beta+1$, \begin{multline*}\tP^\alpha([i,j]) = [i,j] \setminus
    \bigcup\{ I: \forall p,q,r,s \in I \\ \left([p,q]\cap[r,s]\cap
      \tP^\beta([i,j]) = \emptyset \vee
      |\Delta_f(p,q)-\Delta_f(r,s)|\leq \varepsilon
    \right)\}\end{multline*} where $I$ ranges over intervals open in
  $[i,j]$.  Since the $I$ are closed under taking subsets, it suffices
  to let $I$ range over intervals open in $[i,j]$ with rational
  endpoints.  So $\tP^{\alpha}([i,j]) = \emptyset$
  if and only if these $I$ cover $[i,j]$.  If the $I$ do cover, then
  by compactness there is a rational $\delta$ such that for all
  $x\in[i,j]$, $B(x,\delta)\subseteq I$ for some $I$.  Thus there is
  a $\delta$ such that for any open interval $U$ with rational
  endpoints where diam$(U)<\delta$, $U\subseteq I$ for some
  $I$.  On the other hand, if the $I$ do not cover, then there cannot
  be any such $\delta$.  Thus if $\alpha = \beta + 1$,
  \begin{multline*} \tP^{\alpha}([i,j]) = \emptyset \iff \exists
    \delta>0 \forall c\in [i,j] \forall p,q,r,s\in
    B(c,\delta)\cap[i,j] \\ \left([p,q]\cap[r,s]\cap
      \tP^{\beta}([i,j]) = \emptyset \vee
      |\Delta_f(p,q)-\Delta_f(r,s)|\leq \varepsilon
    \right)\end{multline*} where all quantifiers range over the
  rationals.

When $\beta=0$, $[p,q]\cap[r,s]\cap \tP^{\beta}([i,j]) = \emptyset
\Leftrightarrow [p,q]\cap[r,s]\cap[i,j]=\emptyset$, so the above statement is
$\Sigma_2$ uniformly in $e,\varepsilon,i,$ and $j$.

When $\beta>0$, we have
\begin{multline*}[p,q]\cap[r,s]\cap \tP^{\beta}([i,j]) =
\emptyset \\ \iff \exists \zeta \tP^{\beta}([\max(p,r)-\zeta, \min(q,s)+\zeta]\cap [i,j]) = \emptyset.
\end{multline*}
which follows from Lemma \ref{local1} and compactness.  Thus with the assumption that
$\tP^{\beta}([c,d])=\emptyset$ is $\Sigma_{2\beta}$ uniformly in all variables, 
then
$\tP^{\beta+1}([i,j])=\emptyset$ is $\Sigma_{2\beta + 2}$, uniformly in all variables.

Finally, suppose that $\alpha$ is a limit, given as a uniform supremum $\alpha = \sup_n \beta_n$.  Then by compactness and the
definition of $\tP^\alpha$ for $\alpha$ a limit, 
$$P^\alpha([i,j]) = \emptyset \iff \exists n
  \tP^{\beta_n}([i,j]) = \emptyset.$$ So assuming
that $\tP^{\beta_n}([i,j]) = \emptyset$ is uniformly $\Sigma_{2\beta_n}$ in all variables including $n$, we see that
$\tP^{\alpha}([i,j])=\emptyset$ is uniformly $\Sigma_\alpha$, which is the same as $\Sigma_{2\alpha}$ since $\alpha$ is a limit.\end{proof}

\begin{prop}\label{prop2}
For any constructive $\alpha>0$, $D_{\alpha+1}$ is $\Pi_{2\alpha+1}$, uniformly in $\alpha$.
\end{prop}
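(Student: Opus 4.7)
The plan is to express membership in $D_{\alpha+1}$ as the conjunction of ``$\phi_e$ encodes a $C[0,1]$ function'' with a statement obtained by prefixing a single universal quantifier to the condition analyzed in Proposition \ref{2n}. Almost all the work has already been done in the earlier sections; what remains is essentially bookkeeping in the Ash--Knight hierarchy.

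First I would recast ``$|f_e|_{KW} \leq \alpha$'' in terms of the $\tP$ hierarchy. By the Kechris--Woodin definition, $|f_e|_{KW} \leq \alpha$ holds iff $P^\alpha_{f_e,\varepsilon} = \emptyset$ for every $\varepsilon>0$; Lemma \ref{interleaving} shows this is equivalent to $\tP^\alpha_{f_e,\varepsilon}([0,1]) = \emptyset$ for every rational $\varepsilon > 0$. Combining this with the requirement that $\phi_e$ actually encode a continuous function, one obtains
\[
e \in D_{\alpha+1} \iff \bigl(\phi_e \text{ encodes a } C[0,1] \text{ function}\bigr) \wedge \forall \varepsilon \in \mathbb{Q}^+ \bigl(\tP^\alpha_{f_e,\varepsilon}([0,1]) = \emptyset\bigr).
\]

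The core step is then to invoke Proposition \ref{2n}, which gives the inner condition uniformly in $e$, $\alpha$, $\varepsilon$ as a $\Sigma_{2\alpha}$ predicate. Prefixing the universal quantifier $\forall \varepsilon \in \mathbb{Q}^+$ lifts this to $\Pi_{2\alpha+1}$, by the convention that $\Pi_{\beta+1}$ is the $\forall$-closure of $\Sigma_\beta$. The remaining conjunct ``$\phi_e$ encodes a $C[0,1]$ function'' is only $\Pi_2$ (Section \ref{sec2.1}) and is absorbed into $\Pi_{2\alpha+1}$ for any $\alpha \geq 1$. Uniformity in $\alpha$ is inherited directly from the uniformity of Proposition \ref{2n} together with the computability of ordinal arithmetic on notations in $\mathcal{P}$, which lets us produce the notation for $2\alpha + 1$ effectively from $\alpha$.

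There is no real obstacle here; the proposition is a short corollary of Proposition \ref{2n} and Lemma \ref{interleaving}. The only point worth double-checking is that the passage ``$\forall \varepsilon$ of a $\Sigma_{2\alpha}$ predicate is $\Pi_{2\alpha+1}$'' is legitimate at infinite $\alpha$ in the Ash--Knight indexing (including when $\alpha$ is a limit, in which case $2\alpha = \alpha$ and $2\alpha + 1 = \alpha + 1$), but this is immediate from the conventions of Section \ref{sec2.2}.
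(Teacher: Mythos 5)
Your proof is correct and is essentially the paper's own argument: both write $e \in D_{\alpha+1}$ as the conjunction of the $\Pi_2$ statement ``$f_e \in C[0,1]$'' with $\forall \varepsilon\, [\tP^{\alpha}_{f_e,\varepsilon} = \emptyset]$, appeal to Lemma \ref{interleaving} to justify replacing $P$ by $\tP$, and then apply Proposition \ref{2n} to count quantifiers. The paper states this more tersely but the decomposition and the key lemmas invoked are identical.
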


\begin{proof} We have
$$e\in D_{\alpha+1} \iff f_e\in C[0,1] \wedge \forall
    \varepsilon [\tP^{\alpha}_{f_e,\varepsilon} = \emptyset]$$ where
  $\varepsilon$ ranges over positive rationals.  Recall that ``$f_e\in C[0,1]$'' is $\Pi_2$, and by Proposition \ref{2n},  $\tP^{\alpha}_{f_e,\varepsilon} = \emptyset$ is $\Sigma_{2\alpha}$.
Thus the right hand side is a
  $\Pi_{2\alpha+1}$ statement, uniformly in $\alpha$ and $e$.
\end{proof}

\section{Having rank at most $\alpha$ is $\Pi_{2\alpha+1}$-complete}\label{sec3}
In this section, we provide a many-one reduction in the other
direction, from $\overline{\emptyset_{(2\alpha + 1)}}$ to
$D_{\alpha+1}$.  The most significant step of the reduction is found in Section \ref{sec3.3}.  To set up this step, we
happen to need only a certain class of $C[0,1]$ functions which can be
structurally represented by well-founded trees according to a
recipe reminiscent of Mazurkiewicz's original reduction.  This allows
us to construct a function of the right rank through an intermediate
step of constructing a tree with the right structure.

In Section \ref{sec3.1} we construct special $C[0,1]$ functions which
reflect the structure of well-founded trees on
$\mathbb{N}^{<\mathbb{N}}$.  In Section \ref{sec3.2}, we define a rank
on well-founded trees which agrees with the differentiability rank of
the functions that the trees generate, when a fixed arbitrary value for $\varepsilon$ is used.  In Section \ref{sec3.3} we
give a reduction from canonical complete sets to trees of
an appropriate rank, obtaining a result one jump short of the final result.  Section \ref{sec3.4} combines the results of
the previous sections with the additional ingredient
of varying $\varepsilon$ to obtain the final result.

\subsection{Making differentiable functions out of well-founded trees}\label{sec3.1}

The idea of this section is to set up countably many closed disjoint
intervals in $[0,1]$, put the intervals in bijective correspondence
with $\mathbb{N}^{<\mathbb{N}}$, and then given a tree $T\subseteq
  \mathbb{N}^{<\mathbb{N}}$, define $f_T$ as a sum of continuously differentiable bumps supported on
  each of the intervals which correspond to $\sigma\in T$.  These functions are structurally similar to the ones described in Section \ref{buildingblocks}.  If $S = \{\rho : \sigma^\smallfrown \rho \in T\}$ then a shrunken version of $f_S$ can be found in $f_T$.  Furthermore, if $\tau \supset \sigma$, then the bump corresponding to $\tau$ is in the shadow of the bump corresponding to $\sigma$.  The
  intervals are arranged so that the resulting $f_T$ has a
  differentiability rank which can be computed from $T$ in a way that is
  described in the next section.

In the
following definition, the choices of the constants $\frac{1}{2}$ and
$\frac{1}{4}$ and the bounds on $p$ and $p'$ are arbitrary, but
consistent with each other.  The requirement $b_n-a_n <
(a_n-\frac{1}{4})^2$ is what keeps $f_T$ everywhere differentiable.

\begin{definition}\label{tree-translation}
Let $p:[0,1]\rightarrow
\mathbb{R}$ be a computable function satisfying
\begin{enumerate}
\item $p$ is continuously differentiable
\item $p(\frac{1}{2})=\frac{1}{2}$
\item $p(0)=p(1)=p'(0)=p'(1)=0$
\item $||p|| < 1$ and $||p'||<2$
\end{enumerate}
  Let $\{[a_n,b_n]\}_{n\in\mathbb{N}}$ be any computable
  sequence of intervals with rational endpoints satisfying
\begin{enumerate}
\item Each interval is contained in $(\frac{1}{4},\frac{1}{2})$
\item $b_{n+1}<a_n<b_n$ for each n.
\item $\lim_{n\rightarrow\infty}a_n = \frac{1}{4}$
\item $b_n-a_n < (a_n-\frac{1}{4})^2$ for each $n$
\end{enumerate}
Then for any well-founded tree $T\in\Nat^{<\Nat}$, define $f_T$ as
  follows.
\begin{enumerate}
\item If $T$ is empty, $f_T\equiv 0$.
\item Otherwise, $f_T = p[\frac{1}{2},1] + \sum_{n=0}^\infty f_{T_n}[a_n,b_n]$
\end{enumerate}
\end{definition}

Recall that $f[a,b]$ denotes a copy of $f$ proportionally resized to
have domain $[a,b]$, and that $T_n$ denotes $\{\sigma :
n^\smallfrown \sigma \in T\}$, the $n$th subtree of $T$.  Now we
verify that the above definition produces well-defined computable
differentiable functions.

\begin{prop}\label{goodfunction}
  For any well-founded computable tree $T\in\Nat^{<\Nat}$:
\begin{enumerate}
\item $f_T$ is uniformly computable in $T$
\item $f_T$ is differentiable
\item $f_T(0)=f_T(1)=f_T'(0)=f_T'(1)=0$
\item $||f_T||<1$ and $||f_T'||<2$
\end{enumerate}
\end{prop}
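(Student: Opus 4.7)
The plan is to prove all four clauses simultaneously by transfinite induction on the rank $|T|$ of the well-founded tree $T$. The base case $T=\emptyset$ is immediate from $f_T\equiv 0$. For the inductive step, fix $T$ of positive rank and assume (1)--(4) hold for each subtree $T_n$, noting that $|T_n|<|T|$ so the induction is legitimate.

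The central structural observation I would exploit is that the summands in the definition of $f_T$ have pairwise disjoint supports: $p[\tfrac12,1]$ is supported in $[\tfrac12,1]$, each $f_{T_n}[a_n,b_n]$ is supported in $[a_n,b_n]\subset(\tfrac14,\tfrac12)$, and the intervals $[a_n,b_n]$ are themselves pairwise disjoint since $b_{n+1}<a_n$. This together with the inductive boundary conditions $f_{T_n}(0)=f_{T_n}(1)=0$ makes the infinite sum locally finite on $[0,1]\setminus\{\tfrac14\}$ and identically zero on $[0,\tfrac14]$. Clause (3) then drops out: $f_T\equiv 0$ near $0$ gives $f_T(0)=f_T'(0)=0$, while near $1$ only $p[\tfrac12,1]$ contributes and $p(1)=p'(1)=0$ gives $f_T(1)=f_T'(1)=0$. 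For clause (4), I would use that $\|f[a,b]\|=(b-a)\|f\|$ and $\|f[a,b]'\|=\|f'\|$; disjoint supports turn the sum into a supremum, so $\|f_T\|\le\max(\tfrac12\|p\|,\sup_n(b_n-a_n)\|f_{T_n}\|)<1$ and $\|f_T'\|\le\max(\|p'\|,\sup_n\|f_{T_n}'\|)<2$ by the inductive bounds.

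Clause (2), differentiability, is the only place the geometric hypotheses really bite, and I expect this to be the main obstacle. At any $x\neq\tfrac14$ the function agrees in a neighborhood with a finite sum of differentiable pieces that match with zero derivative at each interval endpoint (by the inductive form of (3)), so $f_T$ is differentiable there. At $x=\tfrac14$ the left derivative is $0$ since $f_T\equiv 0$ on $[0,\tfrac14]$. For the right derivative, if $x\in(\tfrac14,\tfrac12)$ lies outside every $[a_n,b_n]$ and outside $[\tfrac12,1]$, then $f_T(x)=0$; if $x\in[a_n,b_n]$, then by the inductive bound
\[
|f_T(x)|=|f_{T_n}[a_n,b_n](x)|\le(b_n-a_n)\|f_{T_n}\|<b_n-a_n<(a_n-\tfrac14)^2,
\]
while $|x-\tfrac14|\ge a_n-\tfrac14$, so the difference quotient is at most $a_n-\tfrac14$. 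Since $a_n\to\tfrac14$, forcing $n\to\infty$ as $x\to\tfrac14^+$, the right derivative is $0$ as well.

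Finally, for clause (1), I would invoke effective transfinite recursion (the form stated at the end of Section~\ref{sec2.2}): from a computable index for $T$ one reads off, uniformly, indices for each $T_n$, and the operations $f\mapsto f[a,b]$, pointwise addition, and translation are uniformly computable on the Simpson codes of Section~\ref{sec2.1}. The envelope $b_n-a_n<(a_n-\tfrac14)^2$ together with clause (4) gives uniform convergence of $\sum_n f_{T_n}[a_n,b_n]$ at an explicit rate, so finitely many summands suffice to approximate $f_T$ to any rational precision on any rational ball, yielding a uniformly computable Simpson code for $f_T$.
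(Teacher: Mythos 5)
Your proposal is correct and follows essentially the same route as the paper: induction on the rank of $T$, exploiting the pairwise disjoint supports for local finiteness away from $\tfrac14$ and for the norm bounds, the envelope $|f_T(x)|\le(x-\tfrac14)^2$ coming from $b_n-a_n<(a_n-\tfrac14)^2$ to handle computability and differentiability at $\tfrac14$, and uniformity of the recursion for clause (1). No gaps.
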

\begin{proof}
  Proceeding by induction on the rank of the tree, in the base case
  all four properties are satisfied.  Assume they hold for all trees
  of rank less than $|T|$.  Then the sequence $f_{T_n}$ is uniformly
  computable with each $||f_{T_n}||<1$.  Then on any interval whose
  closure does not contain $\frac{1}{4}$, $f_T$ is equal to a
  uniformly determined finite sum of computable functions, and is thus
  computable.  And for $\varepsilon$ sufficiently small, we claim that
  $|f_T\left((\frac{1}{4}-\varepsilon,\frac{1}{4}+\varepsilon)\right)|<
  \varepsilon^2$.  This follows because $||f_{T_n}||<1$ by induction and because the
  intervals $[a_n,b_n]$ are disjoint, so for any $x$ in such an
  interval we have the bound
  $$|f_T(x)| = f_{T_n}[a_n,b_n](x) \leq ||f_{T_n}||(b_n-a_n) \leq 1\cdot
  (a_n-\frac{1}{4})^2 \leq (x-\frac{1}{4})^2<\varepsilon^2.$$
  Therefore $f_T$ is uniformly computable in $T$.  Similarly, assuming
  $f_{T_n}$ are each differentiable with
  $f_{T_n}(0)=f_{T_n}(1)=f_{T_n}'(0)=f_{T_n}'(1)=0$, then each
  $f_{T_n}[a_n,b_n]$ is differentiable.  Then $f_T$ is certainly
  differentiable at any point $x\neq\frac{1}{4}$, since on some
  neighborhood of that point $f_T$ is equal to a finite sum of
  differentiable functions.  On the other hand, in the vicinity of
  $\frac{1}{4}$, $f_T$ satisfies $|f_T(x)|\leq (x-\frac{1}{4})^2$, so
  $f_T$ is differentiable at $\frac{1}{4}$ as well.  Because
  \mbox{$f_T\uhr [0,\frac{1}{4}] \equiv 0$} and $p(1)=p'(1)=0$, we
  have $f_T(0)=f_T(1)=f'_T(0)=f_T'(1)=0$.  Finally, $||f_T||<1$ and
  $||f_T'||<2$ by induction, because $||p||<1$,$||p'||<2$, and
  $||f_{T_n}||<1$,$||f_{T_n}'||<2$, for each $n$, and the shrunken
  copies $p[\frac{1}{2},1]$ and $f_{T_n}[a_n,b_n]$ have disjoint support.
\end{proof}

We close this section with some comments about why this $f_T$ is
defined as it is, using the concepts from Section
\ref{buildingblocks}.  Note that for every nonempty $S$, $$
\Delta_{f_{S}}(0,\frac{3}{4}) = \frac{1}{3} \hband
\Delta_{f_{S}}(0,\frac{1}{2}) = 0.$$ Now for each $n$,
$f_{T_n}[a_n,b_n]$ is a proportionally shrunken copy of $f_{T_n}$, so
unless $T_n$ is empty, $f_{T_n}[a_n,b_n]$ contributes a bump and its
pair of secants with slopes 0 and $\frac{1}{3}$.  Thus a tree with
infinitely many children of the root has infinitely many pairs of
these disagreeing secants.  If we construct $T_n$ so that $f_{T_n}$ has
a large rank, the $n$th disagreeing pair of secants will be visible
for many iterations of the rank-ascertaining process for $f_T$,
because $P_{f_T}^\alpha \cap [a_n,\frac{a_n+b_n}{2}]$ will be
nonempty for many iterations.  If we construct $T$ so that $f_{T_n}$
has large rank for infinitely many $n$, these disagreeing pairs of
secants can make a contribution to raising the Kechris-Woodin rank of
$f_T$.  This can happen in two ways: if $|f_{T_n}|_{KW} = \alpha + 1$
for infinitely many $n$, then $|f_T|\geq\alpha+2$.  And if the ranks
of the $f_{T_n}$ are unbounded below a limit ordinal $\lambda$, then
$|f_T|_{KW}=\lambda + 1$.

\subsection{A rank on well-founded trees which agrees with the differentiability rank of the corresponding functions}\label{sec3.2}

We will now show that when a function is generated from a tree in the way described above,
its Kechris-Woodin rank can be read right off the tree.  Furthermore,
we will see that this function's rank can already be witnessed at a
fixed oscillation sensitivity $\varepsilon = \frac{1}{4}$.  That is,
the rank of $f_T$ is always a successor, and when
$|f_T|_{KW}=\alpha+1$, then $\tP_{f,\frac{1}{4}}^\alpha \neq
\emptyset$.  Here is a rank on trees which corresponds to the
differentiability rank of the functions they generate.
\begin{definition}\label{tree-rank}
  For a well-founded tree $T\in\Nat^{<\Nat}$, define the
  limsup rank of the tree by $$|T|_{ls} = \max\left( \sup_n
    |T_n|_{ls}, (\limsup_n |T_n|_{ls})+1 \right),$$ if $T$ is
  nonempty, and $|T|_{ls}=0$ if $T$ is empty.
\end{definition}
Note that reordering the subtrees does not change the limsup rank of the tree.
A node can have a rank higher than all its children in
one of two situations: either there is no child of maximal rank, or
there are infinitely many maximal rank children.  In the next
proposition, we will see that this mechanism corresponds exactly to
the mechanism for constructing functions of increasing
differentiability rank.

The following two straightforward lemmas which we will use later are
woven into the proof of Fact 3.5 in \cite{kw}.  For the purposes of
exposition, we state and prove them here.
\begin{lem}\label{A}
If $U\subseteq[0,1]$ is open and $f\uhr U = g\uhr U$, then for all $\alpha$ and $\varepsilon$, $P_{f,\varepsilon}^\alpha \cap U = P_{g,\varepsilon}^\alpha \cap U$.
\end{lem}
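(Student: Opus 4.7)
The plan is to proceed by transfinite induction on $\alpha$, exploiting the purely local nature of the Kechris--Woodin derivative operation. The base case $\alpha=0$ is immediate: $P^0_{f,\varepsilon}\cap U = U = P^0_{g,\varepsilon}\cap U$. For a limit $\lambda$, commuting $\cap_{\beta<\lambda}$ with $\cap U$ and applying the induction hypothesis termwise gives the result for free. All the work is in the successor step.

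For the successor case $\alpha = \beta+1$, fix $x\in U$. Choose $\delta_0>0$ small enough that $B(x,\delta_0)\cap[0,1]\subseteq U$. The first observation is that $x\in P^{\alpha}_{f,\varepsilon}$ requires witnesses for every $\delta>0$, but any quadruple of witnesses found inside $B(x,\delta_0/2)$ serves as witnesses for every $\delta\geq \delta_0/2$ as well. Hence the universal quantifier over $\delta$ reduces to checking $\delta<\delta_0$. For any such $\delta$ and any candidate witnesses $p<q$, $r<s$ in $B(x,\delta)\cap[0,1]$, both pairs lie in $U$, so $\Delta_f(p,q)=\Delta_g(p,q)$ and $\Delta_f(r,s)=\Delta_g(r,s)$; thus the ``large slope difference'' clause of the definition is verified for $f$ iff it is verified for $g$.

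The remaining clause to check is that $[p,q]\cap[r,s]\cap P^\beta_{f,\varepsilon}$ is nonempty iff $[p,q]\cap[r,s]\cap P^\beta_{g,\varepsilon}$ is. Here one uses that $[p,q]\cap[r,s]$ is a subinterval of $B(x,\delta)\subseteq U$ (since it is convex with endpoints in the ball), so its intersection with $P^\beta_{f,\varepsilon}$ coincides with its intersection with $P^\beta_{f,\varepsilon}\cap U$, and symmetrically for $g$. The induction hypothesis $P^\beta_{f,\varepsilon}\cap U = P^\beta_{g,\varepsilon}\cap U$ then gives the desired equivalence. Consequently the conditions defining membership of $x$ in $P^{\alpha}_{f,\varepsilon}$ and in $P^{\alpha}_{g,\varepsilon}$ coincide, completing the successor step.

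The main (and essentially only) obstacle is arranging the quantifier flow carefully at the successor step: one must restrict to sufficiently small $\delta$ so that all candidate witnesses and the associated intersection $[p,q]\cap[r,s]$ lie in $U$, and then argue that the $\forall \delta>0$ statement is in fact equivalent to its restriction to $\delta<\delta_0$. Once this localization is in place, the induction hypothesis plugs in directly and the conclusion is automatic.
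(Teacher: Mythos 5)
Your proof is correct and follows essentially the same route as the paper: induction on $\alpha$ with trivial base and limit cases, and a successor step that restricts the $\forall\delta$ quantifier to $\delta$ small enough that all witnesses $p,q,r,s$ and the intersection $[p,q]\cap[r,s]$ lie inside $U$, so that the secant slopes agree and the induction hypothesis settles the $P^\beta$ clause. No gaps.
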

\begin{proof}
  By induction on $\alpha$.  The base and limit cases are trivial.
  Suppose that $P^\alpha_{f,\varepsilon}\cap U = P^\alpha_{g,\varepsilon}\cap U$.
  Fix $x\in U$ and let $\lambda$ be small enough that
  $B(X,\lambda)\subseteq U$.  Then $x\in P^{\alpha+1}_{f,\varepsilon}$
  if and only if for all $\delta <\lambda$ there are $p,q,r,s\in
  B(x,\delta)$ such that
  $|\Delta_f(p,q)-\Delta_f(r,s)|\geq\varepsilon$ and
  $[p,q]\cap[r,s]\cap P^\alpha_{f,\varepsilon} \neq \emptyset$.  Since
  $p,q,r,s\in B(x,\delta)\subseteq B(x,\lambda) \subseteq U$, we have
  $[p,q]\cap[r,s]\cap P^{\alpha}_{f,\varepsilon} \neq \emptyset$ if
  and only if $[p,q]\cap[r,s]\cap P^{\alpha}_{g,\varepsilon} \neq
  \emptyset$.  Thus $x\in P^{\alpha+1}_{f,\varepsilon}$ if and only if
  $x\in P^{\alpha+1}_{g,\varepsilon}$.
\end{proof}

Recall that for any function $f\in C[0,1]$, we write $f[a,b]$ to
  denote a proportionally shrunken verson of $f$.  By definition,
  $f[a,b]$ is the function which is identically 0 outside of $[a,b]$,
  and for $x\in [a,b]$, $f[a,b](x) = (b-a)f(\frac{x-a}{b-a})$.
  Similarly, for any real number $c\in[0,1]$ and any interval $[a,b]$,
  let $c[a,b] = a + c(b-a)$.  The point is that $c$ is to $f$ as
  $c[a,b]$ is to $f[a,b]$.

\begin{lem}\label{B}
  Let $f\in C[0,1]$ be a differentiable function satisfying
  $f(0)=f(1)=f'(0)=f'(1)=0$.  Let $[a,b]\subseteq[0,1]$ be an interval
  with rational endpoints.  Then $|f|_{KW} = |f[a,b]|_{KW}$.
  Furthermore, for any ordinal $\alpha$ and for all $x\in[0,1]$,
\begin{enumerate}
\item $x\in P^\alpha_{f,\varepsilon} \implies x[a,b]\in P^{\alpha}_{f[a,b],\varepsilon}$
\item $x[a,b]\in P^\alpha_{f[a,b],\varepsilon} \implies x\in P^\alpha_{f,\varepsilon/2}$
\end{enumerate}
\end{lem}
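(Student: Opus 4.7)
My plan is to prove the two containments (1) and (2) by transfinite induction on $\alpha$ using the key scaling identity
\[\Delta_{f[a,b]}(c[a,b],d[a,b])=\Delta_f(c,d),\]
which follows directly from the definitions of $f[a,b]$ and $c[a,b]$, and then to derive $|f|_{KW}=|f[a,b]|_{KW}$ as a corollary. The base case $\alpha=0$ reduces to $x\in[0,1]\Rightarrow x[a,b]\in[a,b]$, and the limit cases of both (1) and (2) are routine since intersection commutes with the rescaling map. For the successor step of (1), given $\delta>0$ I would apply the definition of $P^{\alpha+1}_{f,\varepsilon}$ at $x$ with parameter $\delta/(b-a)$; the rescaled witnesses $p[a,b],q[a,b],r[a,b],s[a,b]$ land in $B(x[a,b],\delta)\cap[a,b]$ with identical secant slopes by the identity, and the intersection point is carried forward by the inductive hypothesis.

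The main obstacle is the successor step of (2), since witnesses for $x[a,b]\in P^{\alpha+1}_{f[a,b],\varepsilon}$ may stray outside $[a,b]$ into the region where $f[a,b]\equiv 0$. For $x\in(0,1)$ this is painless: $x[a,b]\in(a,b)$, so applying the hypothesis with a sufficiently small radius keeps the witness-ball inside $(a,b)$, and direct rescaling yields the conclusion even with $\varepsilon$ in place of $\varepsilon/2$. The boundary cases $x\in\{0,1\}$ are where the factor $\varepsilon/2$ is genuinely needed; focus on $x=0$, so $x[a,b]=a$. Here I would invoke the hypothesis $f'(0)=0$: since $\lim_{t\to 0^+}f(t)/t=0$, for $\delta$ small enough every secant of $f[a,b]$ with one endpoint in $[0,a]$ and the other in $(a,a+\delta(b-a)]$ has absolute slope below $\varepsilon/4$. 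Therefore $|\Delta(p',q')-\Delta(r',s')|\geq\varepsilon$ forces at least one of the two secants, say $(r',s')$, to lie entirely in $(a,a+\delta(b-a)]$ with $|\Delta_{f[a,b]}(r',s')|\geq 3\varepsilon/4$. Rescaling this secant to $(\hat r,\hat s)\subseteq(0,\delta)$ and pairing it with the manufactured secant $(0,\hat q)$, where $\hat q=(q'-a)/(b-a)>0$, yields a slope difference of at least $3\varepsilon/4-\varepsilon/4=\varepsilon/2$, and the intersection point $z\geq r'>a$ rescales to a valid point of $[0,\hat q]\cap[\hat r,\hat s]\cap P^\alpha_{f,\varepsilon/2}$ via the inductive hypothesis. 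The case $x=1$ is symmetric using $f'(1)=0$.

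Finally, $|f|_{KW}=|f[a,b]|_{KW}$ follows by combining (1) and (2) with Lemma \ref{A}: (1) gives $|f|_{KW}\leq|f[a,b]|_{KW}$ directly by contraposition, while (2) shows $P^\alpha_{f[a,b],\varepsilon}\cap[a,b]=\emptyset$ whenever $P^\alpha_{f,\varepsilon/2}=\emptyset$; Lemma \ref{A} applied on $(0,a)$ and $(b,1)$, where $f[a,b]$ agrees with the zero function (whose $P^\alpha$ is empty for $\alpha\geq 1$), ensures that $P^\alpha_{f[a,b],\varepsilon}$ is empty outside $[a,b]$ as well, yielding the reverse inequality.
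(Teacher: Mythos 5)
Your proposal is correct and follows essentially the same route as the paper's proof: induction on $\alpha$ with the secant-slope scaling identity, direct rescaling for interior points, and at the endpoint the use of $f'(0)=0$ to bound straddling secants by $\varepsilon/4$ and replace the straddling secant by one anchored at $a$, losing at most $\varepsilon/2$. One small imprecision: when you say the slope gap ``forces'' one secant to lie entirely to the right of $a$ with slope at least $3\varepsilon/4$, that bound only follows when the \emph{other} secant straddles $a$; the remaining sub-case, where both secants lie to the right of $a$, needs to be noted separately, but there direct rescaling works with the full $\varepsilon$, exactly as in your interior case.
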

\begin{proof}

  Proceeding by induction, it is clear that the both items holds when
  $\alpha=0$.  The limit case is also trivial.  

  Assume the first item holds for some $\alpha$.  If $x\in
  P^{\alpha+1}_{f,\varepsilon}$, then the collection of all the tuples
  $p,q,r,s$ which witness this can be mapped to a collection of tuples
  $p[a,b],q[a,b],r[a,b],s[a,b]$ which witness $x[a,b] \in
  P^{\alpha+1}_{f[a,b],\varepsilon}$.  
  That proves the
  first item.

  On the other hand, suppose the second item holds for some $\alpha$.
  If $x[a,b]\in P^{\alpha + 1}_{f[a,b],\varepsilon}$ and $x \in (0,1)$
  (i.e. $x$ is not an endpoint), then as above corresponding witnesses
  can always be chosen for sufficiently small neighborhoods of $x$, so
  $x\in P^{\alpha+1}_{f,\varepsilon} \subseteq
  P^{\alpha+1}_{f,\varepsilon/2}$.  Last we consider the endpoint case: suppose $a \in
  P^{\alpha+1}_{f[a,b],\varepsilon}$ (and the case $b\in
  P^{\alpha+1}_{f[a,b],\varepsilon}$ is of course just the same).
  Because $a = 0[a,b]$ and $f'(0)=0$, let $\lambda$ be small enough
  that for all distinct $p,q\in B(a,\lambda)$ with $p\leq a\leq q$,
  $|\Delta_{f[a,b]}(p,q)|<\varepsilon/4$.  Then for each $\delta>0$,
  there are $p,q,r,s\in B(a,\min(\lambda, (b-a)\delta))$ such that
  $|\Delta_{f[a,b]}(p,q)-\Delta_{f[a,b]}(r,s)|\geq \varepsilon$ and
  $[p,q]\cap[r,s]\cap P^\alpha_{f[a,b],\varepsilon} \neq \emptyset$.
  Then without loss of generality, $|\Delta_{f[a,b]}(p,q)|\geq
  \varepsilon/2$, so $a<p<q$.  If also $a<r<s$, then we are done since
  the corresponding $\frac{p-a}{b-a},$ etc. can be used as the witness
  for $\delta$.  It is impossible that $r<s<a<p<q$ because
  $[p,q]\cap[r,s]\neq\emptyset$.  In the last case, if $r<a<s$, this
  implies that $|\Delta_{f[a,b]}(r,s)|<\varepsilon/4$, so
  $|\Delta_{f[a,b]}(p,q)|\geq 3\varepsilon/4$.  But then also
  $|\Delta_{f[a,b]}(a,s)|<\varepsilon/4$, and thus
  $|\Delta_{f[a,b]}(p,q)-\Delta_{f[a,b]}(a,s)|\geq \varepsilon/2$.
  Also $[p,q]\cap[a,s] = [p,q]\cap[r,s]$, and there is some $y\in
  [p,q]\cap[a,s]\cap P^\alpha_{f[a,b],\varepsilon}$, and by induction
  $\frac{y-a}{b-a}\in P^\alpha_{f,\varepsilon/2}$.  Therefore
  $\frac{p-a}{b-a},\frac{q-a}{b-a},0,\frac{s-a}{b-a}$ will do, and
  thus $x\in P^{\alpha+1}_{f,\varepsilon/2}$.

  Finally, note that by the previous lemma, $P^\alpha_{f[a,b],\varepsilon}\cap
  ([0,1]\setminus[a,b]) = \emptyset$ for any $\alpha>0$.  Therefore, $|f|_{KW} = |f[a,b]|_{KW}$.
\end{proof}

The next proposition shows that for any well-founded tree $T$, the
differentiable function $f_T$ defined in the previous section has rank
$|f_T|_{KW}=|T|_{ls}$, and that the rank of $f_T$ is witnessed at
oscillation sensitivity $\varepsilon = \frac{1}{4}$.

\begin{prop}\label{tree-language}
  For any nonempty well-founded tree $T\in\Nat^{<\Nat}$,
\begin{enumerate}
\item  $|T|_{ls}$ is
  a successor,
\item The function $f_T$ is differentiable with
  $|f_T|_{KW} = |T|_{ls}$, and
\item Letting $|T|_{ls}=\alpha+1$, we have
  $P_{f,\frac{1}{4}}^\alpha\neq \emptyset$.
\end{enumerate}
\end{prop}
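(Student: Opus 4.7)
I would prove all three claims by a single transfinite induction on the well-founded rank of the tree $T$. The base case $T = \{\langle\rangle\}$ is immediate: $f_T = p[\tfrac{1}{2}, 1]$ is continuously differentiable, so $|f_T|_{KW} = 1 = |T|_{ls}$ and $P^0_{f_T, 1/4} = [0,1] \neq \emptyset$. For the inductive step of part (1), proceed by a case split: by the inductive hypothesis every $|T_n|_{ls}$ is either $0$ or a successor; $\limsup_n |T_n|_{ls} + 1$ is automatically a successor; and if $\sup_n |T_n|_{ls}$ strictly exceeds $\limsup_n |T_n|_{ls} + 1$, the sup must be attained (otherwise it would equal the limsup) and is therefore a successor by induction. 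So $|T|_{ls} = \max(\sup, \limsup + 1)$ is a successor in every case.

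For the inductive step of (2) and (3), the structural groundwork is to observe that on $[0, \tfrac{1}{4}]$ the function $f_T$ vanishes and on $[\tfrac{1}{2}, 1]$ it coincides with the continuously differentiable $p[\tfrac{1}{2}, 1]$, so $P^\beta_{f_T, \varepsilon} \subseteq [\tfrac{1}{4}, \tfrac{1}{2}]$ for every $\beta \geq 1$ and every $\varepsilon$. Inside the open neighborhood $(b_{n+1}, a_{n-1})$ of $[a_n, b_n]$ (with the convention $a_{-1} = \tfrac{1}{2}$), $f_T$ agrees with $f_{T_n}[a_n, b_n]$, so by Lemma \ref{A} the respective $P^\beta$ sets coincide there. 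Combined with Lemma \ref{B} and the inductive hypothesis $|f_{T_n}|_{KW} = |T_n|_{ls}$, this yields two key facts: (a) $P^{|T_n|_{ls}}_{f_T, \varepsilon} \cap [a_n, b_n] = \emptyset$ for every $n$ and every $\varepsilon$; and (b) whenever $|T_n|_{ls} > \gamma$, the scaled image of the rank-witnessing point for $T_n$ (supplied inductively) lies in $[a_n + (b_n - a_n)/4,\, a_n + (b_n - a_n)/2]$, producing a point of $P^\gamma_{f_T, 1/4}$ there.

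For the upper bound $|f_T|_{KW} \leq \alpha + 1 = |T|_{ls}$, fact (a) together with $|T_n|_{ls} \leq \alpha + 1$ gives $P^{\alpha + 1}_{f_T, \varepsilon} \subseteq \{\tfrac{1}{4}\}$. To rule out $\tfrac{1}{4} \in P^{\alpha + 1}_{f_T, \varepsilon}$, note that either the sup is attained at $\alpha + 1$ by only finitely many children (else $\limsup \geq \alpha + 1$ would push $|T|_{ls}$ up to $\alpha + 2$), or else $|T|_{ls} = \limsup + 1$ with $\sup \leq \alpha$; either way, only finitely many $n$ satisfy $|T_n|_{ls} > \alpha$. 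For $\delta$ smaller than the distance from $\tfrac{1}{4}$ to each such exceptional bump, $B(\tfrac{1}{4}, \delta)$ avoids them, so any candidate witnesses $p, q, r, s \in B(\tfrac{1}{4}, \delta)$ whose intersection meets $P^\alpha_{f_T, \varepsilon}$ must have that intersection contain $\tfrac{1}{4}$ itself; but then the estimate $|f_T(y)| \leq (y - \tfrac{1}{4})^2$ from Proposition \ref{goodfunction} forces all such straddling secants to have slope at most $\delta$, which is less than $\varepsilon/2$ for small $\delta$, contradicting the required disagreement. For the lower bound $P^\alpha_{f_T, 1/4} \neq \emptyset$, if some child achieves $|T_{n_0}|_{ls} = \alpha + 1$, fact (b) directly provides a point of $P^\alpha_{f_T, 1/4}$ in $[a_{n_0}, b_{n_0}]$; otherwise $\limsup |T_n|_{ls} = \alpha$, and I would prove $\tfrac{1}{4} \in P^\beta_{f_T, 1/4}$ by a nested transfinite induction on $\beta \leq \alpha$, using at each successor step $\beta = \gamma + 1$ the canonical bump secants (slopes $0$ and $\tfrac{1}{3}$, difference $\tfrac{1}{3} > \tfrac{1}{4}$) at some $[a_n, b_n] \subset B(\tfrac{1}{4}, \delta)$ with $|T_n|_{ls} > \gamma$, whose intersection $[a_n,\, a_n + (b_n - a_n)/2]$ meets $P^\gamma_{f_T, 1/4}$ by fact (b), and using at limit steps only the definition of $P^\beta$ as an intersection.

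The main obstacle will be ruling out $\tfrac{1}{4} \in P^{\alpha + 1}_{f_T, \varepsilon}$ uniformly over all $\varepsilon > 0$: the geometric flatness estimate $|f_T(y)| \leq (y - \tfrac{1}{4})^2$ must interact correctly with the combinatorial bound on how many bumps can host $P^\alpha$-activity near $\tfrac{1}{4}$, and one must verify that whenever the intersection of two secant-intervals inside $B(\tfrac{1}{4}, \delta)$ meets $P^\alpha$, it does so either inside a small-rank bump (where $P^\alpha$ is empty by fact (a)) or at $\tfrac{1}{4}$ itself (where the flatness estimate kicks in). A secondary subtlety is the nested transfinite induction at $\tfrac{1}{4}$ in the limsup case, where one must cleanly separate the outer induction on the rank of $T$ from the inner induction on $\beta$ and correctly handle limit stages of $\beta$.
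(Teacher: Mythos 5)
Your proposal is correct and follows essentially the same route as the paper's proof: induction on the rank of $T$, Lemmas \ref{A} and \ref{B} to localize the $P^\beta$ sets to the individual bumps, the slope-$0$/slope-$\tfrac{1}{3}$ secant pair over a shadow containing a $P^\gamma$-point for the limsup lower bound, and the reduction of $P^{\sup}$ to the singleton $\{\tfrac{1}{4}\}$ (killed by the quadratic envelope) for the upper bound. The differences are purely organizational --- you make explicit the flatness estimate at $\tfrac{1}{4}$ that the paper leaves implicit in its ``this is enough'' step, you merge the paper's two upper-bound subcases into the single observation that only finitely many children have $|T_n|_{ls}>\alpha$, and you run the secant argument at every successor stage of the nested induction where the paper uses a limit-point/closedness argument for $\beta<\limsup$ --- none of which changes the substance.
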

\begin{proof}
  The proof is by induction on the usual rank of the tree.  If $T$ is
  just a root (smallest option for the rank of the the tree since the
  statement is for nonempty trees only) then $f_T$ is just
  $p[\frac{1}{2},1]$, so it is continuously differentiable with
  $|f_T|_{KW} = 1$.  For each $n$, $T_n = \emptyset$ so $|T_n|_{ls} =
  0$ so $\sup_n|T_n|_{ls} = \limsup_n|T_n|_{ls} = 0$, so $|T|_{ls}=1$.
  
If $T$ is more than a root, assume the lemma holds for each of the
  subtrees $T_n$.  First we show that $|f_T|_{KW} \geq
  |T|_{ls}$.  Fix $n$ and let $|T_n|_{ls} = \alpha + 1$.  Then
  by the inductive hypothesis $|f_{T_n}|_{KW} = \alpha + 1$ and
  $P_{f_{T_n},\frac{1}{4}}^\alpha \neq \emptyset$.  By Lemma
  \ref{B}, $x\in P^\alpha_{f_{T_n},\frac{1}{4}}\implies x[a,b] \in
  P^\alpha_{f_{T_n}[a_n,b_n],\frac{1}{4}}$, so
  $P^\alpha_{f_{T_n}[a_n,b_n],\frac{1}{4}}\neq \emptyset$.  Because
  the $[a_n,b_n]$ are closed and disjoint from each other and from
  $[\frac{1}{2},1]$, there is an $\varepsilon>0$
  such that $f_T\uhr(a_n-\varepsilon,b_n+\varepsilon) = f_{T_n}[a_n,b_n]\uhr(a_n-\varepsilon,b_n+\varepsilon)$, and therefore using Lemma \ref{A},
  $P_{f_T,\frac{1}{4}}^\alpha \cap (a_n-\varepsilon,b_n+\varepsilon) =
  P_{f_{T_n}[a_n,b_n],\frac{1}{4}}^\alpha \cap (a_n-\varepsilon, b_n+\varepsilon)\neq \emptyset$.  Therefore $P_{f_T,\frac{1}{4}}^\alpha \neq
  \emptyset$ and thus $|f_T|_{KW}\geq\alpha+1$. So $|f_T|_{KW} \geq
  \sup_n|T_n|_{ls}$.

  Now let us show that $|f_T|_{KW} \geq (\limsup_n|T_n|_{ls}) + 1$.
  Let $\alpha = \limsup_n|T_n|_{ls}$.  We will show that
  $\frac{1}{4}\in P_{f_T,\frac{1}{4}}^\alpha$.  First we show that for any
  $\beta<\alpha$, $\frac{1}{4}\in P_{f_T,\frac{1}{4}}^\beta$.  There
  are infinitely many $n$ such that $|T_n|_{ls}>\beta$, so $
  P_{f_{T_n},\frac{1}{4}}^\beta\neq\emptyset$ for infinitely many $n$
  by the inductive hypothesis, so
  $P^\beta_{f_{T_n}[a_n,b_n],\frac{1}{4}}\neq \emptyset$ for
  infinitely many $n$ by Lemma \ref{B}.  By Lemma \ref{A}, $
  P_{f_{T_n}[a_n,b_n],\frac{1}{4}}^\beta \subseteq
  P_{f_T,\frac{1}{4}}^\beta$.  Because $\lim_{n\rightarrow\infty} a_n
  = \frac{1}{4}$, and infinitely many $[a_n,b_n]$ contain an element
  of $P^\beta_{f_T,\frac{1}{4}}$, $\frac{1}{4}$ is a limit point of $
  P_{f_T,\frac{1}{4}}^\beta$.  Because this set is closed,
  $\frac{1}{4}$ must be in it as well.  Thus $\frac{1}{4} \in
  P_{f_T,\frac{1}{4}}^\beta$ for all $\beta<\alpha$.  If $\alpha$ is
  as limit, this implies $\frac{1}{4} \in P_{f_T,\frac{1}{4}}^\alpha$,
  so $|f_T|_{KW}>\alpha$ if $\alpha$ is a limit.  Now suppose $\alpha$
  is a successor.  Let $\alpha = \beta + 1$.  Let $U$ be a
  neighborhood of $\frac{1}{4}$, and let $n$ be chosen such that
  $[a_n,b_n]\subseteq U$ and $
  P_{f_{T_n}[a_n,b_n],\frac{1}{4}}^\beta\neq\emptyset$.  Then
  $\Delta_{f_T}(a_n,\frac{3}{4}[a_n,b_n]) = \frac{1}{3}$ and
  $\Delta_{f_T}(a_n,\frac{1}{2}[a_n,b_n]) = 0$, and $
  [a_n,\frac{3}{4}[a_n,b_n]]\cap[a_n,\frac{1}{2}[a_n,b_n]]\cap
  P_{f_T,\frac{1}{4}}^\beta\neq\emptyset$.  Therefore $\frac{1}{4}\in
  P_{f_T,\frac{1}{4}}^{\beta+1}$, so again $|f_T|_{KW}>\alpha$.  This
  completes the claim that $|f_T|_{KW}\geq|T|_{ls}$.

  Now let us show that $|f_T|_{KW} \leq |T|_{ls}$.  First, let $\alpha
  = \sup_n |T_n|_{ls}$.  Note that $\alpha>0$ because the case of $T$
  being only a root was already considered separately.  
  For each $n$, $|T_n|_{ls}\leq \alpha$, so by induction $|f_{T_n}|_{KW}
  =|f_{T_n}[a_n,b_n]|_{KW}\leq \alpha$.  So for each $n$ and
  $\varepsilon$ we have $P^\alpha_{f_{T_n}[a_n,b_n],\varepsilon} =
  \emptyset$ and also $P^\alpha_{p[\frac{1}{2},1],\varepsilon} =
  \emptyset$.  Cover $[0,1]\setminus\{\frac{1}{4}\}$ with open
  intervals $U$ such that each $U$ intersects at most one of the
  $[a_n,b_n]$ or $[\frac{1}{2},1]$.  Then for each such interval and
  each $\varepsilon$, $P^\alpha_{f_T,\varepsilon}\cap
  U=P^\alpha_{f_{T_n}[a_n,b_n],\varepsilon}\cap U = \emptyset$, or
  $P^\alpha_{f_T,\varepsilon}\cap U =
  P^\alpha_{p[\frac{1}{2},1],\varepsilon}\cap U=\emptyset$,
  respectively.  Therefore, for all $\varepsilon$, $
  P_{f_T,\varepsilon}^{\alpha}\subseteq \{\frac{1}{4}\}$.  If
  $\limsup_n|T_n|_{ls} = \sup_n|T_n|_{ls}$, then $|T|_{ls}=\alpha+1$,
  so this is enough: $ P_{f_T,\varepsilon}^{\alpha+1}=\emptyset$ for
  all $\varepsilon$.

  On the other hand, suppose $\limsup_n |T_n|_{ls} < \sup_n
  |T_n|_{ls}$.  Then $\alpha=|T|_{ls}=\sup_n |T_n|_{ls}$ is a
  successor, because the induction hypothesis guarantees $|T_n|_{ls}$
  is always a successor, and therefore if the sup were a limit, it
  would be equal to the limsup.  Let $\alpha=\beta+1$.  Then
  eventually $|T_n|_{ls}\leq \beta$.  Let $V$ be an open neighborhood
  of $\frac{1}{4}$ such that $[a_n,b_n]\cap V\neq \emptyset$ implies
  $|T_n|_{ls}\leq \beta$.  Covering $V\setminus\{\frac{1}{4}\}$ with
  open intervals $U$ as before, we find
  $P^\beta_{f_{T_n}[a_n,b_n],\varepsilon} \cap U = \emptyset$ for each
  such $U\subseteq V$ and each $\varepsilon$, so
  $P_{f_T,\varepsilon}^\beta\cap V \subseteq \{\frac{1}{4}\}$, so
  $P_{f_T,\varepsilon}^{\beta+1}\cap V = \emptyset$. Therefore $
  P_{f_T,\varepsilon}^{\beta+1}=\emptyset$.  Thus $|f_T|_{KW} \leq
  \sup_n |T_n|_{ls}$.
\end{proof}

\subsection{Recognizing trees of limsup rank $\alpha$ is $\Sigma_{2\alpha}$-hard}\label{sec3.3}

This section contains the core of the reduction.  The goal of the
section is to establish a reduction from $\Sigma_{2\alpha}$-complete
sets to trees of rank $\leq \alpha$.  This is one quantifier too few,
but the result is also too strong -- the functions produced from the
resulting trees all reveal their rank at a uniform oscillation
sensitivity $\varepsilon = \frac{1}{4}$.  It will be a simple matter
later to encode another quantifier by producing functions whose ranks
are witnessed at non-uniform oscillation sensitivity.  (Indeed, this
is exactly the approach suggested by the definition of the rank.)

The next two lemmas are routine to establish, and their purpose is
to specify exactly how to strip two quantifiers off most
$\Pi_\alpha$ facts in a particularly nice way, a way which will be
useful for the main argument which is coming up in Lemma
\ref{technical}.

The first lemma takes an arbitrary $\Pi_{\alpha+2}$ fact and rewrites
it in a nice form, with unique witnesses and stable evidence.  In the
process, two computable reduction functions $g_0$ and $g_s$ are
defined which will be used in Lemma \ref{technical}.

\begin{lem}\label{g0}
For any $\Pi_{\alpha+2}$ predicate P(x), there is a $\Pi_{\alpha}$ predicate $R(x,z,y)$ such that
\begin{enumerate}
\item $P(x) \iff \forall z\exists y R(x, z, y)$
\item $R(x,z,y_1) \wedge R(x,z,y_2) \implies y_1=y_2$ ($R$ has unique witnesses)
\item For $z_1<z_2$, $\neg \exists y R(x,z_1,y) \implies \neg \exists y R(x, z_2, y)$ ($R$ has stable evidence)
\item $R(x,z,y) \implies z < y$
\end{enumerate}
\end{lem}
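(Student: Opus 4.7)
The plan is to start from a prenex normal form of $P$ and then progressively modify it to secure each of the four properties while staying in $\Pi_\alpha$. Since $P$ is $\Pi_{\alpha+2}$, write $P(x) \iff \forall z\, \exists y\, Q_0(x,z,y)$ with $Q_0 \in \Pi_\alpha$; this gives property (1) with $R=Q_0$, but none of (2)--(4). To install (3) and (4) I monotonize: set
\[
Q_1(x,z,y) \iff y > z \wedge \forall z' \leq z\, \exists y' \leq y\, \bigl(y' > z' \wedge Q_0(x,z',y')\bigr),
\]
which remains $\Pi_\alpha$ because bounded quantifiers do not raise the level. Taking $y$ large enough to dominate the first $z+1$ witnesses gives $\forall z \exists y\, Q_0 \iff \forall z \exists y\, Q_1$, so (1) is preserved; and $Q_1(x,z,y)$ implies $Q_1(x,z-1,y)$, so if no $y$ witnesses $Q_1$ at $z_1$ then none works for any $z_2 > z_1$, which is exactly stability (3). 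The bound $y > z$ of (4) is built in.

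The hard step is uniqueness (2), and this is where I expect the main obstacle to sit. The naive choice $R(x,z,y) \iff Q_1(x,z,y) \wedge \neg Q_1(x,z,y-1)$ picks out the minimum witness but introduces a $\Sigma_\alpha$ conjunct, pushing $R$ out of $\Pi_\alpha$ into $\Pi_\alpha \wedge \Sigma_\alpha$. The remedy is to avoid minimality clauses altogether and force uniqueness through the \emph{shape} of $y$. Specifically, I will let $y$ encode a full tuple $\langle w_0, w_1, \dots, w_z\rangle$ whose $i$-th coordinate canonically records a witness for $Q_0(x,i,\cdot)$, chosen deterministically by a $\Pi_\alpha$-verifiable rule, and set $R(x,z,y)$ to be the $\Pi_\alpha$ predicate asking only that $y$ decode as such a tuple, that $y > z$, $w_i > i$, and $Q_0(x,i,w_i)$ for every $i \leq z$. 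The point of Slaman's simplification, as I understand it, is to build the canonical rule so that uniqueness drops out of the shape constraint rather than from an explicit minimality clause --- padding the tuple with enough bookkeeping data that only one sequence satisfies the $\Pi_\alpha$ shape requirement.

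With $R$ defined this way, the four conditions then fall out: (1) a valid tuple at stage $z$ exists iff $Q_0(x,i,\cdot)$ has a witness for every $i \leq z$, so $\forall z \exists y\, R$ is equivalent to $P(x)$; (2) holds by the deterministic encoding; (3) holds because any admissible tuple for $z_2 > z_1$ extends an admissible tuple for $z_1$, so failure at $z_1$ blocks all $z_2 > z_1$; and (4) is built in. The only real content is verifying that the deterministic tuple-selection rule can be expressed as a $\Pi_\alpha$ shape condition, which I anticipate being the only subtle step.
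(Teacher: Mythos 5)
Your reduction to the crux is accurate --- the monotonization step is routine (modulo a small bug: as written, $Q_1$ can destroy property (1), since the original $Q_0(x,z',\cdot)$ may only have witnesses $y'\leq z'$, in which case no $y$ satisfies $\exists y'\leq y\,(y'>z'\wedge Q_0(x,z',y'))$; you would first need to close $Q_0$ upward in $y$). But the proof has a genuine gap exactly where you predict the ``only subtle step'' to be: the deterministic, $\Pi_\alpha$-verifiable canonical selection rule is never constructed, and the tuple encoding does not supply one. Packaging $y$ as $\langle w_0,\dots,w_z\rangle$ merely relocates the problem: for each coordinate $i$ you still must pin down \emph{one} witness $w_i$ for the $\Pi_\alpha$ fact $\exists w\,Q_0(x,i,w)$ by a condition that is itself $\Pi_\alpha$, and any rule of the form ``$w_i$ is the least/canonical witness'' reintroduces the $\Sigma_\alpha$ conjunct $\forall w'<w_i\,\neg Q_0(x,i,w')$ that you correctly rejected. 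Asserting that ``bookkeeping data'' makes only one tuple satisfy a $\Pi_\alpha$ shape constraint is the entire content of the lemma, and it is not true for an arbitrary $\Pi_\alpha$ matrix handed to you by prenexing.

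The paper's proof circumvents this by not working with an arbitrary prenex form at all. It takes $P(x)$ to be $x\notin X'''$ for an oracle $X$ with $X'=\emptyset_{(\alpha)}$, and builds, uniformly in $(x,z)$, a set $W^X_{g(x,z)}$ that is c.e.\ in $X$ via a movable-marker construction: a marker sits on the least non-enumerated element and is advanced whenever the current candidate bound $t_s$ for the $\exists v$ quantifier fails. Uniqueness is then a structural fact about the dynamics --- either the marker moves infinitely often, so the complement of $W^X_{g(x,z)}$ is empty, or it stabilizes, so the complement is exactly the singleton $\{\lim_s y^0_s\}$ --- and $R(x,z,y)$ is just ``$y\notin W^X_{g(x,z)}$,'' which is $\Pi_1$ in $X$ and hence $\Pi_\alpha$. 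No definable selection rule over a $\Pi_\alpha$ matrix is ever needed; the ``canonical witness'' is the limit position of the marker, which is not $\Pi_\alpha$-definable as a minimum but is $\Pi_\alpha$-recognizable as the unique non-enumerated point. To repair your proof you would need to supply an argument of this kind (or an equivalent one via the limit lemma relative to an inverse jump of $\emptyset_{(\alpha)}$); the shape-constraint idea as stated does not close the gap.
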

\begin{proof}
We may as well assume that $P(x)$ is ``$x \notin \emptyset_{(\alpha+2)}$''.  For the case $\alpha = 0$, we define $R$ using a computable, total $\{0,1\}$-valued function $g_0$, and set $R(x,z,y) \iff g_0(x,z,y) = 1$.

  Let $e$ be a $\Pi_2$ index for $\overline{\emptyset''}$,
  i.e. $\phi_e$ is total and \mbox{$x\notin \emptyset'' \iff
    \forall v \exists w [\phi_e(x,v,w)=1]$.}  Define $$ g_0(x,z,y) =
  \begin{cases} 1 & \text{ if $y>z$ and for all $v<z$ there
        is $w<y$ such that} \\ & \text{$\phi_e(x,v,w)=1$ and $y$ is
        least such that this is true} \\ 0 & \text{
        otherwise.}\end{cases}$$
One may check that four conditions on $R$ are satisfied.

For the case $\alpha>0$, we define $R$ using a computable, total
function $g_s$ and set $R(x,z,y) \iff g_s(x,z,y) \not\in
\emptyset_{(\alpha)}$.  The construction that defines $g_s$ uses
movable markers to build $\Pi_\alpha$ sets with at most one element.  At
any moment there is one particular element being held which is linked
to a potential least-witness, and this element will be held for as
long as that witness seems viable.

Let $e$ be a universal $\Pi_3$ index, i.e. $\phi_e^X$ is total for all
$X$ and $$x\notin X''' \iff \forall u \exists v \forall w
[\phi_e^X(x,u,v,w) = 1].$$ The intended oracle $X$ is an inverse jump
of $\emptyset_{(\alpha)}$, so that $X' = \emptyset_{(\alpha)}$ and
$X''' = \emptyset_{(\alpha+2)}$.  But the claims of the lemma also
hold for an arbitrary $X$ when we let $P(x)$ be $x\notin X'''$ and
$R(x,z,y)$ be $g_s(x,z,y)\notin X'$.

 Define
  $W_{g(x,z)}^X$ in stages according to the following dynamic
  process.  At stage $s=0$, let $W_{g(x,z),0}^X = \{n: n\leq z\}$, and
  let $t_1 = 0$.  For each $s>0$, let $y^0_s$ and $y^1_s$ be
  respectively the smallest and second smallest elements of
  $\overline{W_{g(x,z),s-1}^X}$.  Check whether $(\forall u <
  z)(\exists v < t_s) (\forall w < s) [\phi_e^X(x,u,v,w)=1]$.  If this
  is so, put $y^1_s$ into $W_{g(x,z),s}^X$, and set $t_{s+1}=t_s$.
  If this is not so, put $y^0_s$ into $W_{g(x,z),s}^X$, and set
  $t_{s+1} = t_s+1$.

  Then define $$W_{g_{s}(x,z,y)}^X = \begin{cases}\mathbb{N} &
      \text{ if } y\in W^X_{g(x,z)}  \\ \emptyset &\text{
        otherwise.}\end{cases}$$
This has the effect that $g_{s}(x,z,y)\in X' \leftrightarrow y\in
W^X_{g(x,z)}$.  

Now let us verify the claims of the lemma, in the more general case
where $P(x)$ is $x\notin X'''$ and $R(x,z,y)$ is $g_s(x,z,y)\notin
X'$.

First we address the second claim, that $R$ has unique witnesses.  For a given $x,z,X$, let us verify that there is at
most one $y$ such that $g_{s}(x,z,y)\notin X'$.  Suppose $y^0_s$ does
not stabilize in the construction above.  Then
$\overline{W_{g(x,z)}^X}$ does not have a smallest element, so it is
empty, so $W_{g(x,z)}^X = \mathbb{N}$.  On the other hand if $y^0_s$
stabilizes, then let $s_0$ be such that for all $s>s_0$,
$y^0_{s_0}=y^0_s$.  Then for all $s>s_0$, it must be that $y^1_s$ is
put into $W_{g(x,z),s}^X$, so
$\overline{W_{g(x,z)}^X}=\{y^0_{s_0}\}$.  Thus in either case,
$W_{g_{s}(x,z,y)}^X = \mathbb{N}$ for all but at most one $y$, so
$g_{s}(x,z,y)\in X'$ for all but at most one $y$.

For the first claim, suppose that $x\notin X'''$.  This is true if and
only if \mbox{$\forall u \exists v \forall w [\phi_e^X(x,u,v,w)=1]$} In that
case, for all $z$, in the construction of $W^X_{g(x,z)}$, we see that $t_s$
stabilizes, because there is a $t$ for which \mbox{$(\forall u < z)(\exists
v< t)(\forall w)[\phi_e^X(x,u,v,w)=1]$.}  And conversely, if $t_s$
stabilizes for each $z$, then $x\notin X'''$.  We have $\lim_s t_s$
exists exactly when $\lim_s y_s^0$ exists, since they always change
together.  And $\lim y_s^0 = y$ exists exactly when
$\overline{W^X_{g(x,z)}} = \{y\}$, which is equivalent to saying
$g_s(x,z,y) \notin X'$.  Thus $x\notin X'''$ if and only if
$g_s(x,z,y) \notin X'$.

For the third claim, note that if $z_1<z_2$ then $\lim_s t_s(z_1) \leq \lim_s t_s(z_2)$ where $t_s(z)$ refers to the $t_s$-values associated to the construciton of $W^X_{g(x,z)}$.  Thus, if $W^X_{g(x,z_1)} = \emptyset$, then $W^X_{g(x,z_2)}=\emptyset$ as well.

Finally, for the last claim, if $y\in \overline{W^X_{g(x,z)}}$ then $y>z$ because
$\{n: n\leq z\}\subseteq W_{g(x,z)}^X$ from the outset.

\end{proof}

The next lemma
explicitly splits up the queries to a $\emptyset^{(\lambda)}$ oracle that
occur during the evaluation of a $\Pi_\lambda$ question.
The goal is to isolate the parts of the computation that can be done
using a weaker oracle.  In the proof we define a function $g_l$ which will be used in Lemma \ref{technical}.

\begin{lem}\label{gls}
Let $\lambda$ be a limit ordinal, given as a uniform supremum $\lambda = \sup_n \beta_n$.  For any $\Pi_\lambda$ predicate $P(x)$ there is a sequence of predicates $R_n$ such that $$P(x) \iff \bigwedge_n R_n(x)$$ where $R_n$ is $\Pi_{2\beta_n}$ for each $n$. Furthermore, the $R_n$ are uniformly computable from $P$ and $\lambda$.
\end{lem}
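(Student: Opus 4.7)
The approach is to exploit the use principle for oracle Turing computations to decompose a $\Pi_\lambda$ question across the approximating levels $\beta_n$. First I would convert $P$ into an oracle-divergence statement: since $P$ is $\Pi_\lambda$, there is a computable $h$ with $P(x)\iff h(x)\notin\emptyset_{(\lambda)}$, and since $\lambda$ is infinite, $\emptyset_{(\lambda)}=(\emptyset^{(\lambda)})'$, so equivalently $\phi_{h(x)}^{\emptyset^{(\lambda)}}(h(x))\uparrow$. Identifying $\lambda$ with the notation $3\cdot 5^e$ witnessing the uniform supremum, the very definition of $H$ at a limit gives $\emptyset^{(\lambda)}=H_{3\cdot 5^e}=\{\langle y,n\rangle:y\in\emptyset^{(\beta_n)}\}$, so a query to $\emptyset^{(\lambda)}$ at position $\langle y,n\rangle$ is answered by $\emptyset^{(\beta_n)}$.

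Next I would define the splitting. For each $n$, let $R_n(x)$ assert that $\phi_{h(x)}(h(x))$ has no halting computation of length at most $n$ in which every oracle query is to a position $\langle y,m\rangle$ with $m\leq n$, answered consistently with $\emptyset^{(\lambda)}$. Any convergent run of $\phi_{h(x)}^{\emptyset^{(\lambda)}}(h(x))$ has finite length $N$ and queries finitely many positions whose second coordinate is bounded by some $M$; such a run falsifies $R_n$ for every $n\geq\max(N,M)$. Conversely, the failure of a single $R_n$ directly exhibits convergence. Hence $P(x)\iff\bigwedge_n R_n(x)$.

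To locate $R_n$ in the hierarchy, note that a candidate halting computation of length $\leq n$ is a finite object, and each of its oracle queries lies in a position whose answer is uniformly computable from $\emptyset^{(\beta_n)}$ (since $\beta_m\leq\beta_n$ for $m\leq n$, and $\emptyset^{(\beta_m)}\leq_T\emptyset^{(\beta_n)}$ uniformly via the structure of $\kO$). Thus existence of such a halting computation is $\Sigma_1$ in $\emptyset^{(\beta_n)}$, and $R_n$ is $\Pi_1$ in $\emptyset^{(\beta_n)}$. In Ash--Knight notation this is $\Pi_{\beta_n+1}$ for finite $\beta_n$ and $\Pi_{\beta_n}$ for infinite $\beta_n$; either class one-reduces into $\Pi_{2\beta_n}$ whenever $\beta_n\geq 1$, and the finitely many indices with $\beta_n=0$ can be absorbed by reindexing the cofinal sequence or replaced with a trivially true predicate. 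Uniformity is immediate, since every step is an explicit, computable manipulation of indices for $P$ and $e$.

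I do not anticipate any real obstacle: the lemma amounts to combining the use principle for oracle computations with the disjoint-union structure of $H_{3\cdot 5^e}$ at a limit notation. The only delicate point is the ordinal-arithmetic bookkeeping needed to confirm that $\Pi_1$-in-$\emptyset^{(\beta_n)}$ embeds uniformly into $\Pi_{2\beta_n}$, which follows from the monotonicity of the canonical complete sets along $\kO$ once $\beta_n\geq 1$.
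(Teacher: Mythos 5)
Your proposal is correct and follows essentially the same route as the paper's proof: both express membership in $\emptyset_{(\lambda)}=(\emptyset^{(\lambda)})'$ as convergence of a computation relative to $\emptyset^{(\lambda)}$, apply the use principle to split this into length-$n$ computations querying only an initial segment of the oracle, and observe that this segment is computable from $\emptyset_{(\beta_n)}$ via the disjoint-union structure of $H$ at a limit notation. The only difference is bookkeeping at the last step: the paper assumes each $\beta_n$ is a successor so that $\beta_n+1\leq 2\beta_n$, whereas you split into the finite and infinite cases and trivialize the indices with $\beta_n=0$, which amounts to the same thing.
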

\begin{proof}
  We may assume that each $\beta_n$ is a successor ordinal, and that $P(x)$ is ``$x\notin \emptyset_{(\lambda)}$''.  Now we define $R_n$ by specifying a computable function $g_l$ below and letting $R_n(x) \iff g_l(x,\lambda,n) \notin \emptyset_{(2\beta_n)}$.

  Uniformly in any pair of constructive ordinals $\alpha<\beta$, there is a reduction from $\emptyset_{(\beta)}$ to $\emptyset_{(\alpha)}$.  (See for example \cite[Lemma 5.1]{ashknight}.)  And any standard encoding will have the property that $\langle z,
  n\rangle \geq n$.  Therefore, \mbox{$\emptyset^{(\lambda)}\!\uhr\! n$} is uniformly computable from $\lambda,n$ and
  $\emptyset_{(\beta_n)}$, in the sense that there is a partial recursive
  function $\sigma(\lambda,n,X)$ which halts and returns
  $\emptyset^{(\lambda)}\uhr n$ if 
  $X=\emptyset_{(\beta_n)}$.

Define $g(x,\lambda,n)$ by $$W^X_{g(x,\lambda,n)} = \begin{cases}
    \emptyset & \text{ if } \phi_{x,n}^{\sigma(\lambda,n,X)}(x)\uparrow \\
    \mathbb{N} & \text{ otherwise.}\end{cases}$$ Suppose that
$x\notin \emptyset_{(\lambda)}$.  This is true if and only
if $$\phi_x^{\emptyset^{(\lambda)}}(x)\uparrow \iff \forall n
\phi_{x,n}^{\emptyset^{(\lambda)}\uhr n}(x)\uparrow \iff \forall n
[g(x,\lambda,n)\notin \emptyset_{(\beta_n + 1)}].$$
Define $g_l(x,\lambda, n)$ so that $g_l(x,\lambda, n) \notin \emptyset_{(2\beta_n)} \iff g(x,\lambda,n) \notin \emptyset_{(\beta_n + 1)}$.  (Since $\beta_n$ is a successor ordinal, $\beta_n + 1 \leq 2\beta_n$.)
\end{proof}

The following lemma contains the heart of the reduction.  Given a
$\Pi_\alpha$ fact, we must build a tree of the appropriate limsup
rank.  Each node of this tree will be associated with a finite set of
$\Pi_\beta$ assertions for different ordinals $\beta$.  The behavior
of the subtree below a node is as follows.  If all the assertions are
true, then the rank of the subtree should be large, on the order of
the largest $\beta$ from the set of assertions.  But if some
$\Pi_\beta$ assertion is false, then the rank of the subtree should be
small, of a similar height to that $\beta$.  

The node achieves this behavior by selecting which assertions should
be given to each of its child-nodes.  The collection of $\Pi_\beta$
assertions, if all true, could be viewed as having a generalized
Skolem function which covers the first two quantifiers of every
assertion in the collection.  The previous two lemmas will ensure
that this Skolem function, if it exists, is unique.  The children try
to guess fragments of this unique Skolem function, and each child is
given a set of assertions which explore the fragment of the Skolem
function that the child provided.  The previous two lemmas will
ensure that if infinitely many children can correctly guess a fragment
of the generalized Skolem function, then (1) all the assertions of the
parent are true and (2) these children, having guessed all the right
witnesses, will achieve high rank.

On the other hand, if some assertion was false at the level of the
parent node, then since the guesses are only fragments, finitely many
children will still come up with lucky guesses which give them a pile
of true assertions, some of which could be very large compared with
the false assertion the parent had.  Therefore, the children also each
re-evaluate all of the non-maximal assertions from their parent node;
this damps the sup of the ranks of the children.

As for damping the limsup, cofinitely many children will automatically
dampen down their own ranks through exploring the false assertions
generated by their Skolem guesses, which were doomed guesses in a
situation where in fact no witnesses existed.  Thus the limsup of the
ranks of the children is damped.  There is a subtlety here.  If the
limsup is supposed to be damped below some limit ordinal, it is not
enough that each child get below that ordinal individually.  They have
to obey a common bound.  That is why, in step (\ref{recurse}) below,
when $\alpha_i$ is a limit ordinal, $M_i$ is chosen the way it is.

All of the complication that is to follow arises in order to deal with
the limit case.  When a node is given only one $\Pi_{\alpha+2}$
assertion, each of its children is simply given a single
$\Pi_{\alpha}$ assertion.  If $\alpha$ is finite, the resulting tree
has finite height and just one assertion per node.  On a first reading
it may be helpful to have this special case in mind.

Here is another example, this one for the simplest limit case.  If a
node is given a single $\Pi_{\omega}$ assertion, that assertion may
be broken up into assertions of size $\Pi_2,\Pi_4,\Pi_6,\Pi_8$, and so
on, such that the original assertion is true if and only if all the
sub-assertions are true.  In that case, most of the children of the
node end up totally empty, but of the ones that do not, the first one
evaluates only the $\Pi_2$  assertion, the second one evaluates the
$\Pi_2$ and $\Pi_4$ assertions, and so on.  If all the assertions are
true, then the childrens' ranks get bigger the more assertions they
evaluate, causing the rank of the whole tree to reach $\omega+1$.  But
if the $\Pi_{2n}$ assertion is false for some $n$, then every child
that evaluates that one has finite rank at most $n$, and
every child that does not evaluate that one has rank at most $n$ as well
(because it only evaluates small assertions).  So the tree as a
whole gets rank at most $n+1$.

\begin{lem}\label{technical}
Let $\alpha_1,\dots,\alpha_k>0$ be constructive ordinals, and let $x_1,\dots,x_k$ be any natural numbers.  Recursively in $\alpha_1,\dots,\alpha_k,x_1,\dots,x_k$, one may compute a well-founded tree $T$ such that
\begin{itemize}
\item $|T|_{ls} = \max_i \alpha_i + 1$ if $x_i \notin \emptyset_{(2\alpha_i)}$ for all $i$\\
\item $|T|_{ls} \leq \alpha_i$ whenever $x_i \in \emptyset_{(2\alpha_i)}$.
\end{itemize}
\end{lem}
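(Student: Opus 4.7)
The plan is to proceed by effective transfinite recursion on $\alpha := \max_i \alpha_i$, using Lemmas~\ref{g0} and~\ref{gls} to peel off witnesses for the most complex assertion at each step. At the root I would choose $i_0$ with $\alpha_{i_0} = \alpha$ and split into cases according to whether $\alpha$ is a successor or a limit.

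If $\alpha = \beta + 1$, Lemma~\ref{g0} presents ``$x_{i_0} \notin \emptyset_{(2\beta+2)}$'' in the form $\forall z\,\exists y\,[g_s(x_{i_0},z,y) \notin \emptyset_{(2\beta)}]$, with $y$ unique in $z$ and the existence stable in $z$. I would give $T$ one child per pair $\langle z,y\rangle$, defined as the tree produced by the recursive call on the assertion list $\{(x_j,\alpha_j) : j \neq i_0\} \cup \{(g_s(x_{i_0},z,y),\beta)\}$. If instead $\alpha = \lambda$ is a limit with uniform supremum $\lambda = \sup_n \beta_n$, Lemma~\ref{gls} presents the maximal assertion as $\bigwedge_n [g_l(x_{i_0},\lambda,n) \notin \emptyset_{(2\beta_n)}]$; now I would let the $n$th child be the tree produced by the recursive call on the list consisting of the non-maximal inherited assertions together with the first $n$ sub-assertions $(g_l(x_{i_0},\lambda,m),\beta_m)$ for $m \leq n$, so that each child's own maximum lies at some $\beta_n < \lambda$ and the recursion is well-founded. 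The base case $\alpha = 1$ is handled analogously using the $\alpha = 0$ branch of Lemma~\ref{g0}, with each child being either a nonempty single-root tree (when the computable check $g_0 = 1$ succeeds) or empty.

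The rank analysis would proceed in three cases. When every assertion holds, uniqueness of $y$ yields for each $z$ exactly one ``correctly guessing'' child $\langle z,y\rangle$ whose assertion list is entirely true; by induction that child has rank $\alpha$, and because $z$ ranges over all of $\mathbb{N}$ we get $\limsup_n |T_n|_{ls} = \alpha$, so $|T|_{ls} = \alpha+1$. When only the maximal assertion $(x_{i_0},\alpha)$ fails, stable evidence says $\exists y\,R(x_{i_0},z,y)$ fails for all but finitely many $z$, so only finitely many children attain rank $\alpha$ and the rest are bounded by $\beta$; hence $\sup_n = \alpha$, $\limsup_n \leq \beta$, and $|T|_{ls} = \alpha = \alpha_{i_0}$. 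When a non-maximal $(x_i,\alpha_i)$ fails (so $\alpha_i < \alpha$), every child inherits that failure, so by the inductive hypothesis every child has rank at most $\alpha_i$.

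The main technical obstacle will be tightening this last bound from the naive $|T|_{ls} \leq \alpha_i + 1$ (which is all that the limsup-rank formula gives from a uniform child bound) down to the claimed $|T|_{ls} \leq \alpha_i$. I expect the remedy to involve strengthening the inductive hypothesis so that, in the failing case, it also records a finiteness condition on how many children attain the boundary rank, or equivalently to design the child-indexing so that a child with a ``lucky'' correct guess for the maximal assertion must simultaneously register the inherited failure of $(x_i,\alpha_i)$ with a second, strictly smaller sub-failure that pushes its rank below $\alpha_i$. Keeping this finiteness intact through the limit case, where the max-assertion decomposition is itself an infinite conjunction indexed by $n$, and verifying uniform computability throughout the effective transfinite recursion, is where I anticipate the delicate bookkeeping to live.
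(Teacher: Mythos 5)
Your overall architecture matches the paper's: effective transfinite recursion on the ordinals, children of the root acting as guessers of the witnesses supplied by Lemmas \ref{g0} and \ref{gls}, and the rank controlled through $|T|_{ls}=\max(\sup_n|T_n|_{ls},\ \limsup_n|T_n|_{ls}+1)$. But the construction as you have written it does not prove the lemma, and the failure point is exactly the ``main technical obstacle'' you flag at the end. If only the maximal assertion is peeled and the non-maximal pairs $(x_j,\alpha_j)$, $j\neq i_0$, are handed to every child unchanged, then when some non-maximal $x_i\in\emptyset_{(2\alpha_i)}$ the inductive hypothesis gives every child rank $\leq\alpha_i$ and nothing better, so the limsup term yields only $|T|_{ls}\leq\alpha_i+1$. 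That extra $1$ cannot be afforded: the induction itself bounds the sup over children by invoking the failing case of the hypothesis, so the loss compounds level by level, and the application in Theorem \ref{noncompact-trick} needs the strict gap between $\alpha_i$ and $\alpha_1+1$. The second remedy you sketch is essentially the one the paper uses, but it must be built into the indexing of the children from the start: a child is indexed by an increasing tuple $\langle m_0,\dots,m_k\rangle$ of guessed witnesses, one for \emph{every} assertion, and its recursive input contains both the non-maximal assertions in full (this caps the sup at $\alpha_i$) and the peeled assertions $(\beta_i,g_s(x_i,m_{i-1},m_i))$ for each successor $\alpha_i=\beta_i+1$ (this damps the limsup: by stable evidence and uniqueness of witnesses, once $m_{i-1}$ passes the threshold for the failing assertion the peeled assertion is false and the child's rank drops to $\leq\beta_i$, with only finitely many children escaping). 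This finiteness cannot be recovered afterwards by ``recording a finiteness condition'' in the inductive hypothesis; the strengthening that actually closes the induction concerns the limit case (a bound $\beta_z+1$ tied to the least failing sub-assertion $z$) and additionally requires choosing, for each limit $\alpha_i$, a cutoff $M_i$ dominating all smaller ordinals in the child's list, so that the children obey a \emph{common} bound below the limit rather than individual ones.

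A second, independent defect: peeling only one maximal assertion leaves the maximum unchanged when it is attained by several $\alpha_i$, so the recursion on $\max_i\alpha_i$ is not well-founded as stated, and the rank comes out wrong: a child still carrying an unpeeled true assertion at level $\alpha$ has rank $\alpha+1$ by induction, forcing $|T|_{ls}\geq\alpha+2$ in the all-true case. Ties genuinely arise even when the top-level call has $k=1$ (for instance, the limit step hands a child the assertions $(\beta_m,\cdot)$ for $m\leq n$, and peeling $\beta_n=\gamma+1$ can reproduce $\gamma=\beta_{n-1}$). The paper avoids both problems at once by peeling \emph{every} assertion at every node and retaining in full only those strictly below the maximum, so that the largest ordinal occurring in each recursive call strictly decreases.
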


\begin{proof}

In order to perform the induction we will actually prove something
slightly stronger.  If $x_i\in \emptyset_{(2\alpha_i)}$ for $\alpha_i$ a limit, given as $\alpha_i = \sup_n\beta_n$, then by Lemma \ref{gls} there is a least $z$ such that
$g_{l}(x_i,\alpha_i,z)\in \emptyset_{(2\beta_z)}$.  In this case, we
will ensure that $T$ also satisfies $
|T|_{ls}\leq
\beta_z + 1
$ for that least $z$.

Define $T$ recursively as follows.  Renumber the inputs so that $\alpha_1\geq\dots\geq\alpha_k$.  (Since all the ordinal notations are comparable, this step is computable).  The empty sequence is in $T$.  To
compute information about the $n$th child of the root, decode $n$ as
$n=\langle m_0,m_1,\dots,m_k\rangle$ and do the following:
\begin{enumerate}
\item Check that $m_0<m_1<\dots<m_k$.  If it is not, $T_n = \emptyset$.
\item For any $i$ such that $\alpha_i$ is a limit, check that \mbox{$m_i=m_{i-1}+1$.}  If it does not, then
  $T_n=\emptyset$.
\item\label{check} For any $i$ such that $\alpha_i=1$, check that $g_0(x_i,m_{i-1},m_i)=1$.  If it does not, then $T_n=\emptyset$.
\item\label{alpha1=1} If $\alpha_1=1$, $T_n=\{\emptyset\}$.
\item\label{recurse} Otherwise, we decide the subtree
  rooted at $\langle m_0,\dots,m_k\rangle$ according to membership in
  the tree which we will now specify.  Build a finite set
  $\mathcal{F}$ of ordinal-input pairs as follows.
\begin{itemize}
\item Let $\mathcal{F}_1 = \{(\alpha_i,x_i) : \alpha_i < \alpha_1\}$\\
\item Let $\mathcal{F}_2 = \{(\beta,g_s(x_i,m_{i-1},m_i)) : \alpha_i = \beta+1 \text{ where } \beta>0\}$\\
\item For each limit $\alpha_i = \sup_n \beta_n$, let $M_i\geq m_i$ be least such that for each $(\gamma,x)\in\mathcal{F}_1\cup\mathcal{F}_2$, if $\gamma<\alpha_i$, then $\gamma\leq  \beta_{M_i} $.  (Again, this $M_i$ may be effectively computed since the notations involved are all comparable.)  For each $n\leq M_i$, let $(\beta_n, g_l(x_i,\alpha_i,n))\in\mathcal{F}_3$.
\end{itemize}
Let $\mathcal{F} = \mathcal{F}_1\cup\mathcal{F}_2\cup\mathcal{F}_3$.  Then $T_n$ is defined recursively as the tree computed from the pairs in $\mathcal{F}$.
\end{enumerate}
 This completes the construction.

Observe that the resulting $T$ is well-founded because each time we
recurse, the size of the largest ordinal under consideration
decreases.  Let us verify the properties of this $T$. We proceed by
induction on the size of $\max_i \alpha_i$.

For now on, consider the $\alpha_i$ to be numbered in order, so $\max_i \alpha_i = \alpha_1$.

In the base case, $\alpha_1
= \dots = \alpha_k=1$. If $g_0(x_i,m_{i-1},m_i)=0$ for any $i$, then $T=\{\emptyset\}$ and
$|T|_{ls}=1$ which is correct.  If $g_0(x_i,m_{i-1},m_i)=1$ for all $i$, step (\ref{alpha1=1}) is encountered infinitely often and thus $|T|_{ls} = 2$, which is correct.

Now we consider the case $\alpha_1>1$. If, when computing subtree $T_n$, the algorithm makes it to step
\ref{recurse}, then we call $n$ a \emph{recursing child}.

By induction we may always assume that for each child of the root $n$,
\mbox{$|T_n|_{ls} \leq \alpha_1$}.  This follows because
$|T_n|_{ls}\leq 1$ for non-recursing children $n$, and for recursing
children $n$, the ordinals considered in order to decide subtree $T_n$
are all less than $\alpha_1$.  Therefore it is always true that
$|T|_{ls}\leq\alpha_1+1$.

\subsubsection*{Case 1: The rank should be large}

Suppose that for all $i$, $x_i\notin \emptyset_{(2\alpha_i)}$.  Let us see that in this case $|T|_{ls} =
\alpha_1+1$ is attained.  Recall that a child of the root $n$
is decoded as $n=\langle m_0,\dots,m_k\rangle$.  For each choice of
$m_0$, a certain child of the root is obtained by inductively choosing
$m_i$ as follows according to the nature of $\alpha_i$.  The functions $g_0$ and $g_s$ are as defined in Lemma \ref{g0}.
\begin{enumerate}
\item If $\alpha_i = 1$, choose $m_i$ so that $g_0(x_i,m_{i-1},m_i) = 1$,
\item If $\alpha_i = \beta+1$ with $\beta>0$, choose $m_i$ so that $g_{s}(x_i,m_{i-1},m_i)
  \notin \emptyset_{(2\beta)}$
\item If $\alpha_i$ is a limit, choose $m_i=m_{i-1}+1$.
\end{enumerate} Let $n_j$ be the child so constructed starting with $m_0=j$.
By the definitions of
$g_0$ and $g_{s}$, each $m_i$ described above exists, is unique, and
satisfies $m_i>m_{i-1}$.

One can check that $n_j$ is a recursing child, and so $T_{n_j}$ is
formed using a finite set of ordinal-index pairs $(\gamma,z)$.  Notice
that the choices of $m_i$ above, together with the fact that for all
$i$, $x_i\notin \emptyset_{(2\alpha_i)}$, guarantee
that $z\notin \emptyset_{(2\gamma)}$ for each of these
pairs $(\gamma,z)$. Therefore, $|T_{n_j}|_{ls}$ will be determined by the
largest ordinal under consideration in the construction of $T_{n_j}$.
Now if $a_1 = \beta+1$, then one of the pairs under consideration in
the construction of $T_{n_j}$ is $(\beta, g_{s}(x_1,m_0,m_1))$, and $\beta$ is maximal among ordinals
considered for $T_{n_j}$.  Therefore by the inductive hypothesis, for
each $j$ we have $|T_{n_j}|_{ls} = \beta + 1 =
\alpha_1$.  Since there are infinitely many child subtrees
where this rank is obtained, \mbox{$\limsup_n |T_n|_{ls} =
  \alpha_1$} and thus $|T|_{ls}= \alpha_1+ 1$ as
required.  On the other hand, if $\alpha_1=\sup_n\beta_n$ is a limit, then
$(\beta_{M_1},g_{l}(x_1,\alpha_1,M_1))$ is used when assembling $T_{n_j}$,
and $\beta_{M_1}$ is maximal among ordinals considered, because
if $\alpha_i<\alpha_1$, then $\beta_{M_1}\geq\alpha_i$, and if $\alpha_i = \alpha_1$, then $M_i = M_1$ (since their selection algorithms are identical).  Therefore, by the
inductive hypothesis, $$|T_{n_j}|_{ls} =
  \beta_{M_1}+1 >
  \beta_{j}+1$$ because $M_1\geq
m_1>m_0=j$.  Since $\lim_j \beta_j = \alpha_1$, we
have $$
\lim_j|T_{n_j}|_{ls} \geq \lim_j (\beta_j +
1) = \alpha_1$$ as well.  Therefore,
\mbox{$\limsup_n |T_n|_{ls} = \alpha_1$} and $|T|_{ls} =
\alpha_1+1$ as required.  Therefore, if for all $i$,
$x_i\notin \emptyset_{(2\alpha_i)}$, then
\mbox{$|T|_{ls} = \alpha_1+1$.}

\subsubsection*{Case 2: The rank should be small}

On the other hand, suppose that $x_i\in {\emptyset_{(2\alpha_i)}}$ for some $i$.  Fix an index $r$ at which this occurs.  We will
show that $|T|_{ls}\leq \alpha_r$.

\emph{Subcase 2.1} Suppose $\alpha_r=\beta_r+1$.  By Lemma \ref{g0} let
$z_r$ be such that $$
(\forall z>z_r)(\forall y>z) [g_{s}(x_r,z,y)\in
\emptyset_{(2\beta_r)}]
$$ if $\beta_r>0$, or such that $(\forall z> z_r)(\forall
y>z)[g_0(x_r,z,y)=0]$ if $\beta_r=0$.  One may check that for any
child $n=\langle m_0,\dots,m_k\rangle$ such that $m_{r-1}>z_r$, if $n$ is recursing, then
included in consideration for $T_n$ is $(\beta_r,g_{s}(x_r,m_{r-1},m_r))$
where $g_{s}(x_r,m_{r-1},m_r)\in \emptyset_{(2\beta_r)}$; and if $n$ is not recursing, $T_n = \emptyset$.  Therefore by induction,
\mbox{$|T_n|_{ls}\leq \beta_r < \alpha_r$} for such
$n$.

Now let us consider recursing children $n$ such that $m_{r-1}\leq
z_r$.  There are only finitely many ways $m_0<\dots<m_{r-1}\leq z_r$
to begin such children.  Fix one such beginning.  We claim that for
all but at most one choice of the remaining $m_r<\dots<m_k$,
$|T_n|_{ls}< \alpha_r$.  That one choice, if it exists, is constructed
inductively as in the previous case.  That is, for each $i\geq r$, choose $m_i$ to satisfy
\begin{enumerate}
\item If $\alpha_i = 1$, satisfy $g_0(x_i,m_{i-1},m_i) = 1$,
\item If $\alpha_i = \beta+1$
  with $\beta>0$, satisfy $g_{s}(x_i,m_{i-1},m_i) \notin \emptyset_{(2\beta)}$, and
\item If $\alpha_i$ is a limit, let $m_i = m_{i-1}+1$.
\end{enumerate}
If these $m_i$ exist, they are unique.  Suppose we deviate from this
recipe in the case of $\alpha_i$ a limit.  Then $T_n$
is empty.  Suppose we deviate from this one way in the case of $\alpha_i =
1$, and let $g_0(x_i,m_{i-1},m_i)=0$.  Then by step (\ref{check}), $T_n$ is empty.  Suppose we deviate from
this one way in the case of $\alpha_i = \beta+1$, and include $(\beta,
g_{s}(x_i,m_{i-1},m_i))$ in the assembling of $T_n$, where
$g_{s}(x_i,m_{i-1},m_i)\in \emptyset_{(2\beta)}$.  Then by the inductive
hypothesis we are guaranteed $|T_n|_{ls}\leq \beta <
\alpha_i \leq \alpha_r$. Therefore,
considering all children $n$, there are at most finitely many such that
$|T_n|_{ls}\geq \alpha_r$.  Therefore, $\limsup_n
|T_n|_{ls} \leq \beta_r$.

It remains to show that for each recursing child $n$, $|T_n|_{ls} \leq
\alpha_r$.  There are two possibilities.  If $\alpha_1>
\alpha_r$, then $(\alpha_r,x_r)$ is included in consideration for $T_n$, and
thus by the inductive hypothesis $|T_n|_{ls} \leq \alpha_r$.  On the other hand, if $\alpha_1=\alpha_r=\beta_r+1$, then $\beta_r$ is maximal among
ordinals considered for $T_n$, so by the inductive hypothesis
$|T_n|_{ls} \leq \beta_r + 1=\alpha_r$.
Therefore, if $\alpha_r$ is a successor, then $|T|_{ls}\leq
\alpha_r$.

\emph{Subcase 2.2}: Suppose $\alpha_r =\sup_n\beta_n$ is a limit.  Using Lemma \ref{gls}, let $z_r$ be least such that \mbox{$g_{ls}(x_r,\alpha_r,z_r) \in
\emptyset_{(2\beta_{z_r})}$.}  Let us consider children $n=\langle
m_0,\dots,m_k\rangle$ such that $m_r\geq z_r$.  For each of these $n$,
the pair $(\beta_{z_r},g_{l}(x_r,\alpha_r,z_r))$ is used in assembling $T_n$.  So
for each such $n$, $|T_n|_{ls} \leq \beta_{z_r}$.

On the other hand, there are the $n$ such that $m_r<z_r$.  There are
only finitely many ways $m_0<\dots<m_r<z_r$ to begin such an $n$.  We
claim that for each such beginning, there is at most one sequence
$m_{r+1},\dots,m_k$ which completes $n$ in such a way that
$|T_n|_{ls}>\beta_{z_r}$.  The strategy is exactly the
same as in the successor case.  See (1)-(3) above.

In each case, if such an $m_i$ exists, it is unique.  If we deviate
from this plan in the case of $\alpha_i$ a limit or $\alpha_i=1$, then
one may check that $T_n$ is empty.  If we deviate in the case of
$\alpha_i=\beta+1$ with $\beta>0$, then we include
$(\beta,g_{s}(x_i,m_{i-1},m_i))\in\mathcal{F}_2$, where
$g_{s}(x_i,m_{i-1},m_i)\in \emptyset_{(2\beta)}$.  So to start with,
$|T_n|_{ls}\leq \beta$, and if $\beta\leq \beta_{z_r}$
 then $|T_n|_{ls}$ is small enough.  But if
$\beta> \beta_{z_r}$, then this bound is insufficient.  In that
case, recall that during the creation of $\mathcal{F}_3$ which was
used to assemble $T_n$, we defined $M_r$ to satisfy $M_r\geq m_r$ and
$\beta_{M_r}\geq \gamma$ for each
$(\gamma,z)\in\mathcal{F}_{2}$ such that $\gamma < \alpha_r$.  Because $\alpha_i<\alpha_r$, $\beta < \alpha_r$.  So
$\beta_{M_r}\geq \beta > \beta_{z_r}$, so $M_r>z_r$.  So in particular,
$(\beta_{z_r},g_{l}(x_r,\alpha_r,z_r))$ was included when assembling
$T_n$.  Therefore, $|T_n|_{ls}\leq \beta_{z_r}$.
Therefore, for all but finitely many $n$,
$|T_n|_{ls}\leq \beta_{z_r}$.

It remains to show that for each individual $n$, $|T_n|_{ls}\leq
\beta_{z_r}+ 1$.

We now consider two cases.  Suppose $\alpha_1 > \alpha_r$.  Then for each
$n$, the pair $(\alpha_r,x_r)$ is again under consideration for $T_n$.  But
the new leading ordinal is smaller, so by induction, $|T_n|_{ls} \leq
\beta_{z_r} + 1$ for each $n$.  On the other hand,
if $\alpha_1 = \alpha_r$, then $\alpha_1=\alpha_2=\dots=\alpha_r$, so
$M_1=M_2=\dots=M_r$, since the algorithm which selects $M_i$ is the same for each $i = 1,\dots, r$. One may check that $\beta_{M_r}$ is the largest ordinal
under consideration in the assembling of $T_n$.  If
$\beta_{M_r} \geq \beta_{z_r}$, then $(\beta_{z_r},g_{l}(x_r,\alpha_r,z_r))$
is included, and $|T_n|_{ls}\leq \beta_{z_r}$.  On the other hand, if
$\beta_{M_r} < \beta_{z_r}$, then since $\beta_{M_r}$
is largest, $$
|T_n|_{ls}\leq \beta_{M_r} + 1 \leq \beta_{z_r}.
$$
Therefore, $\sup_n |T_n|_{ls} \leq \beta_{z_r} + 1$.
Therefore, if $\alpha_r$ is a limit with $g_{l}(x_r,\alpha_r,z_r)\in \emptyset_{(2\beta_{z_r})}$, then  $|T|_{ls}\leq \beta_{z_r} + 1 < \alpha_r$.  This
completes the proof.
\end{proof}

\subsection{Recognizing functions of rank $\alpha$ is $\Pi_{2\alpha+1}$-hard}\label{sec3.4}

In this section we obtain the final result by consideration of what
can be encoded into the oscillation sensitivity $\varepsilon$ at which
a function's rank is witnessed.  For this last step, it is necessary
to consider functions again instead of trees, because with the trees
we only produce functions made of bumps with all the same proportions.
In the next theorem, we use functions made of increasingly shallow
bumps, and encode the last jump into the uncertainty of how small
$\varepsilon$ will have to be in order for the bumps which determine
the function's rank to be detectable.

\begin{thm}\label{noncompact-trick}
Uniformly in a constructive ordinal $\alpha>0$ and $x$, one may find a computable $f\in C[0,1]$ satisfying
\begin{itemize}
\item $x\notin \emptyset_{(2\alpha + 1)} \rightarrow |f|_{KW} \leq \alpha$
\item $x\in \emptyset_{(2\alpha + 1)} \rightarrow |f|_{KW} = \alpha + 1$
\end{itemize}
\end{thm}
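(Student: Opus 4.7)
The plan is to convert the $\Sigma_{2\alpha+1}$ condition $x\in \emptyset_{(2\alpha+1)}$ into $\exists n\,[y(x,n)\notin\emptyset_{(2\alpha)}]$, apply Lemma \ref{technical} to each disjunct to produce trees, translate those trees into differentiable functions by Definition \ref{tree-translation}, and then assemble the pieces with vertical scaling so that the existential quantifier over $n$ is encoded into the oscillation sensitivity at which the rank of the resulting $f$ is witnessed.

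Concretely, I would first use that $\emptyset_{(2\alpha+1)}$ is c.e.\ in $\emptyset_{(2\alpha)}$ to obtain (uniformly in $x$ and $\alpha$) a recursive function $y(x,n)$ with $x\in\emptyset_{(2\alpha+1)}$ iff $\exists n\,[y(x,n)\notin \emptyset_{(2\alpha)}]$. For each $n$, apply Lemma \ref{technical} to $(\alpha, y(x,n))$ to obtain a well-founded computable tree $T^{(n)}$ with $|T^{(n)}|_{ls}=\alpha+1$ when $y(x,n)\notin \emptyset_{(2\alpha)}$ and $|T^{(n)}|_{ls}\leq \alpha$ otherwise. Let $f_n=f_{T^{(n)}}$; by Proposition \ref{tree-language}, $|f_n|_{KW}=|T^{(n)}|_{ls}$, and in the rank-$(\alpha+1)$ case the rank is witnessed at $\varepsilon=\tfrac14$. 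Next choose a computable sequence of disjoint intervals $[a_n,b_n]\subset (\tfrac14,\tfrac12)$ with rational endpoints satisfying $b_{n+1}<a_n<b_n$, $a_n\to\tfrac14$, and $b_n-a_n<(a_n-\tfrac14)^2/n$, and set
\[ f \;=\; \sum_{n=1}^\infty \tfrac1n f_n[a_n,b_n]. \]
The shrinking condition together with $\|f_n\|<1$ and $\|f_n'\|<2$ makes $f$ uniformly computable in $(x,\alpha)$ and differentiable on $[0,1]$ with $f(\tfrac14)=f'(\tfrac14)=0$, by the calculation of Proposition \ref{goodfunction}; moreover the pointwise bound $|f'(y)|\leq 2/n$ on bump $n$ forces $f'(y)\to 0$ as $y\to\tfrac14$.

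For the rank analysis, each $\tfrac1n f_n[a_n,b_n]$ has KW-rank at most $\alpha+1$: Lemma \ref{B} handles the horizontal rescaling, and the vertical-rescaling identity $P^\beta_{cf,\varepsilon}=P^\beta_{f,\varepsilon/c}$ handles the factor $\tfrac1n$. Applying Lemma \ref{A} on open neighborhoods that meet at most one bump and miss $\tfrac14$ gives $P^{\alpha+1}_{f,\varepsilon}\subseteq\{\tfrac14\}$, and the vanishing-slope analysis then rules out $\tfrac14 \in P^1_{f,\varepsilon}$ for every $\varepsilon$. So $|f|_{KW}\leq \alpha+1$ always, and $|f|_{KW}\leq \alpha$ whenever every $|f_n|_{KW}\leq \alpha$ (i.e.\ when $x\notin\emptyset_{(2\alpha+1)}$). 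Conversely, if some $n_0$ has $y(x,n_0)\notin \emptyset_{(2\alpha)}$, then $P^\alpha_{f_{n_0},1/4}\neq\emptyset$; Lemma \ref{B} and the vertical-scaling identity carry this over to $P^\alpha_{f,1/(4n_0)}\neq \emptyset$, giving $|f|_{KW}=\alpha+1$. Thus $|f|_{KW}=\alpha+1$ iff $x\in\emptyset_{(2\alpha+1)}$, as required.

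The main obstacle is showing $\tfrac14\notin P^1_{f,\varepsilon}$ carefully, particularly for secants whose endpoints straddle distinct bumps (where the Mean Value Theorem on a single bump does not directly apply). The condition $b_n-a_n<(a_n-\tfrac14)^2/n$ is chosen precisely to make $|f(y)|\leq (y-\tfrac14)^2/n$ on bump $n$, so that both same-bump and cross-bump secant slopes are uniformly small on any window shrinking to $\tfrac14$; this is the shallow-bump mechanism used for the limit-rank construction in Section \ref{buildingblocks}, and it is what lets a single common $\delta$ work against every prescribed $\varepsilon$.
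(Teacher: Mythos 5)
Your proposal is correct and follows essentially the same route as the paper: decompose $x\in\emptyset_{(2\alpha+1)}$ as $\exists n\,[y(x,n)\notin\emptyset_{(2\alpha)}]$, feed each $(\alpha,y(x,n))$ to Lemma \ref{technical}, place the resulting $f_{T^{(n)}}$ on shrinking disjoint intervals with vertical scaling $1/n$ so that the witnessing oscillation sensitivity $1/(4n)$ encodes the existential quantifier, and handle the accumulation point by the vanishing-slope estimate $\|f'\|\le 2/n$ on bump $n$. The only differences (accumulating at $\tfrac14$ rather than $0$, and folding an extra factor $1/n$ into the interval lengths) are cosmetic.
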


\begin{proof}
Given $\alpha,x$, compute $f$ as follows.  Similar to earlier, let $\{[a_n,b_n]\}_{n\in\mathbb{N}}$ be any computable
  sequence of intervals with rational endpoints satisfying
\begin{itemize}
\item Each interval is contained in $(0,1)$
\item $b_{n+1}<a_n<b_n$ for each n.
\item $\lim_{n\rightarrow\infty}a_n = 0$
\item $b_n-a_n < a_n^2$
\end{itemize} 
Let $g$ be a computable function satisfying for all $x$ and $X$, 
$$
x\in X'' \iff \exists s [g(x,s)\notin X'].
$$ 
Then  $$
x\in \emptyset_{(2\alpha+1)} \iff \exists s [g(x,s)\notin \emptyset_{(2\alpha)}].
$$ 
For any $s$, let $T(s)$ be the tree
guaranteed by Lemma \ref{technical} with input $(\alpha, g(x,s))$.  Thus $|T(s)|_{ls} = \alpha + 1$ if $g(x,s)\notin \emptyset_{(2\alpha)}$ and $|T(s)|_{ls} \leq \alpha$ otherwise.  Then define 
$$ 
f =\sum_{s=0}^\infty \frac{1}{s+1}f_{T(s)}[a_s,b_s].
$$ 
Recall that Proposition \ref{goodfunction} guarantees that
$||f_{T(s)}||<1$,
so $$||\frac{1}{s+1}f_{T(s)}[a_s,b_s]|| < \frac{b_s-a_s}{s+1} <
\frac{a_s^2}{s+1}.$$ On neighborhoods bounded away from 0,
$f$ is a uniformly presented sum of finitely many computable
differentiable functions, but $f$ lives in the envelope of
$x^2$, so it is computable near $0$ as well.  Thus $f$ is
computable and differentiable.

Suppose $x\notin \emptyset_{(2\alpha+1)}$.  Then for each $s$, $g(x,s)\in \emptyset_{(2\alpha)}$, so
\mbox{$|T(s)|_{ls} \leq \alpha$,} so
$|f_{T(s)}|_{KW} \leq \alpha$.  For each $z\neq 0$,
there is a neighborhood $U$ of $z$ which intersects exactly one of the
$[a_s,b_s]$.  Because
\mbox{$P^\alpha_{f_{T(s)}[a_s,b_s],\varepsilon} =
  \emptyset$} for all $\varepsilon$, and $f_{T(s)}[a_s,b_s]$
coincides with $f$ on $U$, Lemma \ref{A} implies that $z
\notin P_{f,\varepsilon}^\alpha$ for any
$\varepsilon$.  On the other hand, fix $\varepsilon$ and let $z=0$.
Then for any $s$, by Proposition \ref{goodfunction}, $||f_{T(s)}'||<2$, so
$$||\frac{1}{s+1}f_{T(s)}'[a_s,b_s]||= \frac{1}{s+1}||f_{T(s)}'||<\frac{2}{s+1}.$$  Let $S$
be large enough that $\frac{4}{S+1}<\varepsilon$.  Then for all
$p,q,r,s \in [0,b_S)$,
\begin{align*}|\Delta_{f}(p,q)-\Delta_{f}(r,s)|&\leq
|\Delta_{f}(p,q)|+|\Delta_{f}(r,s)| \\ &\leq
2||f'\uhr[0,b_S)|| < \frac{4}{S+1} < \varepsilon,\end{align*} so
$0\notin P_{f,\varepsilon}^\beta$ for any $\beta>0$.
Therefore $P_{f,\varepsilon}^{\alpha} = \emptyset$ for all $\varepsilon$
and $|f|_{KW} \leq \alpha$.

On the other hand, suppose that $x\in \emptyset_{(2\alpha+1)}$.  Let $s$ be such
that \mbox{$g(x,s)\notin \emptyset_{(2\alpha)}.$} Then $T(s)$ has
rank $\alpha+1$.  So $|f_{T(s)}|_{KW} = \alpha+1$, and this rank is visible at oscillation sensitivity
 $\varepsilon=\frac{1}{4}$ by Proposition \ref{tree-language}.
So also $|\frac{1}{s+1}f_{T(s)}|_{KW} = |\xi^{-1}(a)|_\kO+1$, and this rank is visible at oscillation sensitivity $\varepsilon = \frac{1}{4(s+1)}$.  Therefore by Lemmas \ref{B} and \ref{A},
$$\emptyset \neq P^{\alpha}_{\frac{1}{s+1}f_{T(s)}[a_s,b_s],\frac{1}{4(s+1)}} \subseteq P^{\alpha}_{f,\frac{1}{4(s+1)}}.$$  Thus $|f|_{KW} \geq \alpha + 1$.  Also, for each $s$, 
\mbox{$|f_{T(s)}|_{KW}\leq\alpha+1$,} and $0\notin P^\beta_{f,
\varepsilon}$ for any $\varepsilon$ and any $\beta>0$, so just as above, 
\mbox{$|f|_{KW}\leq \alpha +1$} always.  So in fact $|f|_{KW}=\alpha+1$.
\end{proof}

Therefore, we have the following:
\begin{thm}\label{maintheorem}
  For each nonzero $\alpha<\omega_1^{CK}$, $D_{\alpha+1}$ is
  $\Pi_{2\alpha+1}$-complete.
\end{thm}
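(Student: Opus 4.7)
The plan is to combine the upper bound already proved in Proposition~\ref{prop2} with a hardness reduction distilled from Theorem~\ref{noncompact-trick}. Proposition~\ref{prop2} shows $D_{\alpha+1}$ is $\Pi_{2\alpha+1}$ uniformly in $\alpha$, so all that remains is to exhibit a reduction from a canonical $\Pi_{2\alpha+1}$-complete set into $D_{\alpha+1}$.

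For the hardness direction I would take $\overline{\emptyset_{(2\alpha+1)}}$ as the target of the reduction, since this is $\Pi_{2\alpha+1}$-complete by the conventions of Section~\ref{sec2.2}. Theorem~\ref{noncompact-trick} produces, uniformly computably in $(\alpha,x)$, a computable differentiable $f\in C[0,1]$ whose Kechris--Woodin rank is at most $\alpha$ when $x\notin\emptyset_{(2\alpha+1)}$ and equals $\alpha+1$ when $x\in\emptyset_{(2\alpha+1)}$. Applying the s-m-n theorem to that uniform construction yields a recursive function $h$ with $f_{h(x)}$ equal to the function assigned to input $x$. Then
$$x\notin\emptyset_{(2\alpha+1)}\iff |f_{h(x)}|_{KW}\leq\alpha\iff h(x)\in D_{\alpha+1},$$
so $h$ is a many-one reduction from $\overline{\emptyset_{(2\alpha+1)}}$ to $D_{\alpha+1}$. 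Because $D_{\alpha+1}$ admits padding (any index for a function can be replaced by infinitely many equivalent indices), this many-one reduction can be upgraded to a one-reduction in the sense of Section~\ref{sec2.2}, which witnesses $\Pi_{2\alpha+1}$-hardness. Together with Proposition~\ref{prop2}, this gives $\Pi_{2\alpha+1}$-completeness.

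The only real obstacle at this final step is bookkeeping: one must check that the uniformity in $\alpha$ passes cleanly through s-m-n so that a single reduction procedure handles every constructive ordinal. The substantive content has already been packaged into Theorem~\ref{noncompact-trick}, which in turn rests on the tree-to-function encoding of Section~\ref{sec3.1}, the agreement of $|f_T|_{KW}$ with $|T|_{ls}$ at the uniform sensitivity $\varepsilon=\tfrac14$ established in Proposition~\ref{tree-language}, the core tree-construction of Lemma~\ref{technical}, and the non-uniform $\varepsilon$ trick used in Theorem~\ref{noncompact-trick} to absorb the extra existential quantifier in $\emptyset_{(2\alpha+1)}$ over $\emptyset_{(2\alpha)}$. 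Given those ingredients, the theorem is a short corollary.
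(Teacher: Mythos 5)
Your proposal is correct and matches the paper's own proof, which is exactly the two-line combination of Proposition \ref{prop2} for the upper bound and Theorem \ref{noncompact-trick} for the hardness reduction; the s-m-n and padding bookkeeping you mention is the same convention the paper sets up in Section \ref{sec2.2}. Nothing further is needed.
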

\begin{proof}  By Proposition \ref{prop2}, $D_{\alpha+1} \leq_m
  \overline{\emptyset_{(2\alpha+1)}}$.  By Theorem \ref{noncompact-trick},
  $\overline{\emptyset_{(2\alpha+1)}}\leq_m D_{\alpha+1}$.
\end{proof}

\begin{thm}\label{limittheorem}
For any limit ordinal $\lambda < \omega_1^{CK}$, $D_\lambda$ is
$\Sigma_{\lambda}$-complete.
\end{thm}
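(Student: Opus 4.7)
Plan: I would show $D_\lambda \equiv_1 \emptyset_{(\lambda)}$. For the upper bound, fix a uniform presentation $\lambda = \sup_n \beta_n$ as a supremum of successor ordinals. Since $\lambda$ is a limit, $|f|_{KW} < \lambda$ iff $|f|_{KW} < \beta_n+1$ for some $n$, so
$$D_\lambda = \bigcup_{n} D_{\beta_n+1}.$$
Proposition \ref{prop2} makes each $D_{\beta_n+1}$ uniformly $\Pi_{2\beta_n+1}$, so this display exhibits $D_\lambda$ as a uniform existential over $n$ of $\Pi_{2\beta_n+1}$ predicates, yielding $D_\lambda \leq_1 \emptyset_{(\lambda)}$.

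The hardness direction requires a uniform reduction $x \mapsto f_x$ with $x \in \emptyset_{(\lambda)} \iff f_x \in D_\lambda$. I would apply Lemma \ref{gls} to the $\Pi_\lambda$ predicate ``$x \notin \emptyset_{(\lambda)}$'' to obtain predicates $R_n(x) \iff g_l(x,\lambda,n) \notin \emptyset_{(2\beta_n)}$ with $x \notin \emptyset_{(\lambda)}$ iff $\forall n\, R_n(x)$; the proof of that lemma in fact shows that $\neg R_n(x)$ is monotone in $n$, since it amounts to the halting of a truncated oracle computation preserved as the truncation is relaxed. For each $n$, I feed the list of pairs $(\beta_0, g_l(x,\lambda,0)),\dots,(\beta_n, g_l(x,\lambda,n))$ into Lemma \ref{technical} to obtain a well-founded tree $T_n$, computable uniformly in $n,x$, with $|T_n|_{ls} = \beta_n+1$ when $R_i(x)$ holds for every $i \leq n$, and $|T_n|_{ls} \leq \beta_i$ whenever $R_i(x)$ fails for some $i \leq n$. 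Then, mimicking the scaling in Theorem \ref{noncompact-trick}, I set
$$f_x \;=\; \sum_{n=0}^{\infty} \frac{1}{n+1}\, f_{T_n}[a_n, b_n],$$
with disjoint intervals $[a_n, b_n]$ accumulating at $0$ and satisfying $b_n - a_n < a_n^2$. The envelope $|f_x(z)| \leq z^2$ near $0$ makes $f_x$ computable and differentiable everywhere, with $f_x'(0)=0$.

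In the negative case $x \notin \emptyset_{(\lambda)}$, every $|T_n|_{ls} = \beta_n+1$ is witnessed at $\varepsilon = \tfrac14$ by Proposition \ref{tree-language}; Lemmas \ref{A} and \ref{B}, combined with a rescaling of $\varepsilon$ to $\tfrac{1}{4(n+1)}$ to absorb the vertical scaling, yield $P^{\beta_n}_{f_x,\,1/(4(n+1))} \neq \emptyset$ for each $n$, so $|f_x|_{KW} \geq \lambda$ and $f_x \notin D_\lambda$. The main obstacle is the positive case $x \in \emptyset_{(\lambda)}$: here one must establish a single uniform bound on $|T_n|_{ls}$ valid for all $n$. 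Letting $i_0$ be the least index with $\neg R_{i_0}(x)$, monotonicity forces $\neg R_n(x)$ for all $n \geq i_0$, so Lemma \ref{technical} gives $|T_n|_{ls} \leq \beta_{i_0}$ for $n \geq i_0$; for $n < i_0$ one has $|T_n|_{ls} = \beta_n+1 \leq \beta_{i_0}$, assuming strict monotonicity of the chosen $\beta_n$. With this uniform bound in hand, Lemma \ref{A} bounds the local rank of $f_x$ away from $0$ by $\beta_{i_0}$, while the scaling $\tfrac{1}{n+1}$ drives $\|f_x'\|$ on $[0, b_N]$ to $0$ as $N\to\infty$, so that $0 \notin P^1_{f_x,\varepsilon}$ for every $\varepsilon > 0$. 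Together these force $|f_x|_{KW} \leq \beta_{i_0}+1 < \lambda$, giving $f_x \in D_\lambda$ and completing the reduction.
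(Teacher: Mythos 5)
Your upper bound is identical to the paper's. For the hardness direction, however, you take a genuinely different (and more laborious) route. The paper simply feeds the single pair $(\lambda, x)$ into Lemma \ref{technical}: since $2\lambda=\lambda$ for a limit ordinal, that lemma directly yields a tree $T$ with $|T|_{ls}=\lambda+1$ when $x\notin\emptyset_{(\lambda)}$ and $|T|_{ls}\leq\lambda$ when $x\in\emptyset_{(\lambda)}$; because the limsup rank is always a successor (Proposition \ref{tree-language}), the latter bound is automatically strict, so $x\in\emptyset_{(\lambda)}\iff|f_T|_{KW}<\lambda$ and no further assembly is needed. You instead unfold one level of the limit-stage machinery that Lemma \ref{technical} already encapsulates: you invoke Lemma \ref{gls} externally, build a separate tree $T_n$ for each initial segment of the resulting conjunction, and glue the functions $f_{T_n}$ together with the vertical $\tfrac{1}{n+1}$ scaling borrowed from Theorem \ref{noncompact-trick}. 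This works --- your uniform bound $\beta_{i_0}$ in the positive case is sound (indeed, you do not even need the monotonicity of $\neg R_n$ you argue for: every $T_n$ with $n\geq i_0$ already contains the bad pair at index $i_0$, so Lemma \ref{technical} bounds its rank by $\beta_{i_0}$ outright), and the negative case correctly gives $|f_x|_{KW}\geq\lambda$. What the paper's route buys is economy: no scaling construction, no function-level gluing, and no case analysis, because the hard work was front-loaded into the limit case of Lemma \ref{technical}. What your route buys is a self-contained argument that makes the limit-stage mechanism visible at the level of functions rather than hidden inside the tree recursion; the vertical scaling is not strictly necessary for your argument (without it the ranks still land on the correct side of $\lambda$), but it does make the behavior at $0$ cleaner.
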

\begin{proof}
First we show that $D_\lambda$ is $\Sigma_{\lambda}$.  Given $\lambda = \sup_n\beta_n$, we have $e\in D_\lambda \iff \exists n[e\in D_{\beta_n+1}]$.  Each $e\in D_{\beta_n+1}$ is $\Pi_{2\beta_n+1}$ by Proposition \ref{prop2},
so $D_\lambda$ is $\Sigma_{\lambda}$.

Now we show that $D_\lambda$ is $\Sigma_{\lambda}$-complete by
giving an appropriate reduction.  
We claim that $$x\in \emptyset_{(\lambda)} \iff \left| f_{T}\right|_{KW} < \lambda,$$ where $T$
is the tree constructed in Lemma \ref{technical} from input $(\lambda, x)$.  That lemma
guarantees first that $x\notin \emptyset_{(\lambda)}$ implies $|T|_{ls} = \lambda + 1$.
Conversely, if $x\in \emptyset_{(\lambda)}$ we have $|T|_{ls}
\leq \lambda$.  But by Proposition \ref{tree-language}, the limsup
rank of a tree is always a successor, so in fact $x \in \emptyset_{(\lambda)}$
implies $|T|_{ls} <\lambda$.
Thus $x\in \emptyset_{(\lambda)} \iff |T|_{ls}
  <\lambda \iff \left|f_T\right|_{KW}
  <\lambda.$
\end{proof}

\bibliographystyle{amsalpha}
\bibliography{ddbib}{}

\end{document}